\newtheorem{theorem}{Theorem}[section]
\newtheorem{lemma}[theorem]{Lemma}
\newtheorem{proposition}[theorem]{Proposition}
\newtheorem{corollary}[theorem]{Corollary}
\newtheorem{definition}[theorem]{Definition}
\newtheorem{conjecture}[theorem]{Conjecture}
\def\be{\begin{equation}}
\def\ee{\end{equation}}
\def\br{\begin{eqnarray}}
\def\er{\end{eqnarray}}
 \title{Strongly Gauduchon spaces}
 \author{Lingxu Meng}
 \author{Wei Xia}
 \address{Department of Mathematics\\  North University of China \\ Taiyuan, Shanxi 030051,
P.R. China}
\email{20160012@nuc.edu.cn}
\address{Institute of Mathematics\\  Fudan University \\ Shanghai
200433, P.R. China}
\email{11110180005@fudan.edu.cn}
\thanks{}
 \date{}
\begin{document}
\maketitle

\begin{abstract}
We define strongly Gauduchon spaces and the class $\mathscr{SG}$ which are generalization of strongly Gauduchon manifolds in complex spaces. Comparing with the case of K\"ahlerian, the strongly Gauduchon space and the class $\mathscr{SG}$ are similar to the K\"ahler space and the Fujiki class $\mathscr{C}$ respectively. Some properties about these complex spaces are obtained, and the relations between the strongly Gauduchon spaces and the class $\mathscr{SG}$ are studied.
\end{abstract}

\textbf{keywords}: strongly Gauduchon metric, strongly Gauduchon space, \\ class $\mathscr{SG}$, topologically essential map.

\textbf{AMSC}: 32C15, 32C10, 53C55,

\section{Introduction}
The complex manifold with a strongly Gauduchon metric is an important object in non-K\"ahler geometry. In \cite{Pop1}, D. Popovici first defined the strongly Gauduchon metric in the study of limits of projective manifolds under deformations.  A \emph{strongly Gauduchon} metric on a complex $n$-dimensional manifold is a hermitian metric $\omega$ such that $\partial\omega^{n-1}$ is $\overline{\partial}$-exact. A compact complex manifold is called a \emph{strongly Gauduchon manifold}, if there exists a strongly Gauduchon metric on it.
\begin{proposition}
 Let $M$ be a compact complex manifold of dimension $n$. Then the following is equivalent.
\begin{enumerate}[leftmargin=*]
\item[(1)] $M$ is a strongly Gauduchon manifold.
\item[(2)] There exists a strictly positive $(n-1, n-1)$-form $\Omega$, such that $\partial\Omega$ is $\overline{\partial}$-exact.
\item[(3)] There exists a real closed $(2n-2)$-form $\Omega$ whose $(n-1, n-1)$-component $\Omega^{n-1, n-1}$ is strictly positive.
\end{enumerate}
\end{proposition}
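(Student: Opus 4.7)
My plan is to prove the implications in the order $(1)\Rightarrow(2)\Rightarrow(3)\Rightarrow(2)\Rightarrow(1)$, although in fact $(1)\Leftrightarrow(2)$ and $(2)\Leftrightarrow(3)$ can be handled as two essentially independent passes.

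For $(1)\Rightarrow(2)$ the natural witness is $\Omega:=\omega^{n-1}$: it is strictly positive because $\omega$ is, and $\partial\Omega=\partial\omega^{n-1}$ is $\overline{\partial}$-exact by hypothesis. The converse $(2)\Rightarrow(1)$ is the step I expect to be the most delicate, and it is where a nontrivial external input is required: I would invoke Michelsohn's theorem (from her 1982 Acta paper on the Gauduchon cone) stating that on an $n$-dimensional complex manifold every strictly positive smooth $(n-1,n-1)$-form has a unique $(n-1)$-st root, i.e.\ can be written as $\omega^{n-1}$ for a unique strictly positive $(1,1)$-form $\omega$. Granting this, the $\omega$ associated with the $\Omega$ in (2) is a hermitian metric satisfying $\partial\omega^{n-1}=\partial\Omega\in\operatorname{Im}\overline{\partial}$, which is (1).

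For $(2)\Rightarrow(3)$, write $\partial\Omega=\overline{\partial}\beta$ for some $(n,n-2)$-form $\beta$ (the bidegree is forced because $\partial\Omega$ has bidegree $(n,n-1)$). Since $\Omega$ is real, conjugating gives $\overline{\partial}\Omega=\partial\overline{\beta}$. I then set
\[
\widetilde\Omega:=\Omega-\beta-\overline{\beta}.
\]
This is real, its $(n-1,n-1)$-component is exactly $\Omega$ (strictly positive), and the bidegree-by-bidegree check—using $\partial\beta=0$ and $\overline{\partial}\overline{\beta}=0$ for dimension reasons, together with $\partial\Omega=\overline{\partial}\beta$ and its conjugate—shows $d\widetilde\Omega=0$.

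For $(3)\Rightarrow(2)$, decompose $\widetilde\Omega=\widetilde\Omega^{n-2,n}+\widetilde\Omega^{n-1,n-1}+\widetilde\Omega^{n,n-2}$, with the two extremal pieces conjugate to each other by reality. Splitting the identity $d\widetilde\Omega=0$ by bidegree yields, in bidegree $(n,n-1)$, the relation $\partial\widetilde\Omega^{n-1,n-1}=-\overline{\partial}\widetilde\Omega^{n,n-2}$. Setting $\Omega:=\widetilde\Omega^{n-1,n-1}$ exhibits $\partial\Omega$ as $\overline{\partial}$-exact, and $\Omega$ is strictly positive by assumption. This completes the circle.

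The only nontrivial ingredient is Michelsohn's $(n-1)$-st root theorem in $(2)\Rightarrow(1)$; everything else is a bookkeeping exercise with bidegrees and the fact that on an $n$-fold, forms of bidegree $(p,q)$ with $p>n$ or $q>n$ vanish.
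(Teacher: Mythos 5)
Your proposal is correct and follows essentially the same route as the paper: $(1)\Rightarrow(2)$ is immediate, $(2)\Rightarrow(1)$ rests on Michelsohn's unique $(n-1)$-st root theorem for strictly positive $(n-1,n-1)$-forms (the paper cites \cite{M}, page 280, for exactly this), and the equivalence with $(3)$ is the same bidegree bookkeeping with $\Omega-\beta-\overline{\beta}$ that the paper attributes to Popovici and later writes out explicitly after Definition \ref{def sG}. The only quibble is cosmetic: Michelsohn's 1982 \emph{Acta} paper is ``On the existence of special metrics in complex geometry,'' not a paper on the Gauduchon cone.
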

In \cite{Pop1}, D. Popovici observed (1) and (3) are equivalent.  ``$(1)\Rightarrow (2)$" is obvious by the definition of strongly Gauduchon manifolds. Conversely, for any strictly positive $(n-1, n-1)$-form $\Omega$, there exists a unique strictly positive $(1, 1)$-form $\omega$, such that $\omega^{n-1}=\Omega$ (see \cite{M}, page 280). So we have ``$(2)\Rightarrow (1)$".

D. Popovici proved following two important theorems.
\begin{theorem}[\cite{Pop1}, Proposition 3.3]\label{sGc}
Let $M$ be a compact complex manifold. Then $M$ is a strongly Gauduchon manifold  if and only if there is no nonzero positive current $T$ of bidegree $(1, 1)$ on $M$ which is $d$-exact on $M$.
\end{theorem}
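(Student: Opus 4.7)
The plan is to prove the two directions separately: the easy direction by pairing a hypothetical bad current against the closed $(2n-2)$-form furnished by the preceding proposition, and the converse by a Hahn--Banach separation argument in the Fr\'echet space of smooth forms, followed by a de Rham identification to extract $d$-exactness.

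For the direction $(\Rightarrow)$, I would invoke equivalence $(1)\Leftrightarrow(3)$ of the preceding proposition to fix a real closed $(2n-2)$-form $\Omega$ with strictly positive $(n-1,n-1)$-component $\Omega^{n-1,n-1}$. If $T=dS$ is any $d$-exact positive $(1,1)$-current, Stokes' formula for currents combined with $d\Omega=0$ yields $\langle T,\Omega\rangle=0$, while a bidegree argument gives $\langle T,\Omega\rangle=\langle T,\Omega^{n-1,n-1}\rangle$. Strict positivity of $\Omega^{n-1,n-1}$ together with positivity of $T$ then forces $T=0$.

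For the converse $(\Leftarrow)$ I would argue by contrapositive: assuming $M$ is not strongly Gauduchon, produce a nonzero $d$-exact positive $(1,1)$-current. In the Fr\'echet space $V$ of smooth real $(n-1,n-1)$-forms on $M$, let $\mathcal{P}\subset V$ be the open convex cone of strictly positive forms and let
\[
\mathcal{G}=\{\alpha^{n-1,n-1}\;:\;\alpha\in\mathcal{A}^{2n-2}_{\mathbb{R}}(M),\ d\alpha=0\}\subset V.
\]
By the preceding proposition, failure of the strongly Gauduchon property is exactly $\mathcal{G}\cap\mathcal{P}=\emptyset$. Hahn--Banach separation of an open convex set from a disjoint linear subspace in a locally convex space produces a nonzero continuous $\mathbb{R}$-linear functional $T\colon V\to\mathbb{R}$, i.e.\ a nonzero real $(1,1)$-current, with $T|_{\mathcal{G}}=0$ and $T|_{\mathcal{P}}>0$; the second condition exhibits $T$ as a nonzero positive current. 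To extract $d$-exactness, observe that for any real smooth $(2n-3)$-form $\psi$ the form $d\psi$ is closed, hence $(d\psi)^{n-1,n-1}\in\mathcal{G}$; by the bidegree of $T$, $\langle T,d\psi\rangle=\langle T,(d\psi)^{n-1,n-1}\rangle=0$, so $dT=0$. Similarly, for any closed real smooth $(2n-2)$-form $\varphi$ one has $\langle T,\varphi\rangle=\langle T,\varphi^{n-1,n-1}\rangle=0$, so the de Rham class $[T]$ annihilates all of $H^{2n-2}_{dR}(M,\mathbb{R})$. By Poincar\'e duality and the fact that the de Rham cohomology of currents agrees with that of smooth forms, $[T]=0$ in $H^2_{dR}(M,\mathbb{R})$, so $T=dS$ for some $1$-current $S$, contradicting the hypothesis.

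The main obstacle I expect is the careful setup of the Hahn--Banach step: one must verify that $\mathcal{P}$ is genuinely open in the $C^\infty$ topology (strict positivity on the compact $M$ is preserved under small perturbations), that the separating functional actually lies in the topological dual (i.e.\ is a current), and that the strict sign on $\mathcal{P}$ is correctly obtained by using $0\in\mathcal{G}$ together with the fact that $\mathcal{G}$ is a linear subspace (which forces the separating constant to vanish). Once the separation is in hand, the bidegree bookkeeping and the de Rham identification of closed currents with cohomology classes are routine.
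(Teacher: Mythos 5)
The paper does not prove this theorem; it simply quotes it from Popovici (\cite{Pop1}, Proposition 3.3). Your argument is correct and is essentially Popovici's original proof: the easy direction by pairing a $d$-exact positive $(1,1)$-current against the closed $(2n-2)$-form with strictly positive $(n-1,n-1)$-component, and the converse by Hahn--Banach separation of the open convex cone of strictly positive $(n-1,n-1)$-forms from the subspace of $(n-1,n-1)$-components of real closed $(2n-2)$-forms, followed by the bidegree bookkeeping and Poincar\'e duality to conclude that the resulting positive $(1,1)$-current is $d$-exact.
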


\begin{theorem}[\cite{Pop2}, Theorem 1.3]\label{msG}
Let $f:M\rightarrow N$ be a modification of  compact complex manifolds. Then $M$ is a strongly Gauduchon manifold if and only if $N$ is a strongly Gauduchon manifold.
\end{theorem}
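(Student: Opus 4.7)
The natural strategy is to apply Theorem~\ref{sGc}, which reduces the strongly Gauduchon property of a compact complex manifold to the non-existence of nonzero positive $d$-exact $(1,1)$-currents, and then to argue each implication by contradiction, transporting such currents across $f$ via push-forward and pull-back.

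Suppose first that $M$ is strongly Gauduchon and, for contradiction, that $N$ is not. Then by Theorem~\ref{sGc} there exists a nonzero positive $d$-exact $(1,1)$-current $T$ on $N$. Let $E\subset M$ denote the $f$-exceptional analytic set, so that $f(E)\subset N$ has codimension at least $2$. Since a positive $(1,1)$-current supported in an analytic subset of codimension $\ge 2$ must vanish, $T$ has nonzero mass on the Zariski open set $N\setminus f(E)$, on which $f$ restricts to a biholomorphism. Pulling $T|_{N\setminus f(E)}$ back under this biholomorphism and extending trivially across $E$, justified by the Skoda--El Mir theorem since the pulled-back current is positive and, by properness of $f$, of locally finite mass near $E$, produces a nonzero positive closed $(1,1)$-current $f^*T$ on $M$. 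Its de Rham class equals $f^*\{T\}=0$, so $f^*T$ is $d$-exact, contradicting Theorem~\ref{sGc} on $M$.

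Conversely, suppose $N$ is strongly Gauduchon and $M$ is not; let $S$ be a nonzero positive $d$-exact $(1,1)$-current on $M$. Its push-forward $f_*S$ is a positive, $d$-exact $(1,1)$-current on $N$, since $f_*$ preserves positivity, preserves bidegree (because $\dim M=\dim N$), and commutes with $d$. If $f_*S\neq 0$, Theorem~\ref{sGc} applied to $N$ yields an immediate contradiction. Otherwise $f_*S=0$, which forces $\operatorname{supp}(S)\subset E$, and the Siu decomposition then expresses $S=\sum_j\lambda_j\,[E_j]$, where the $E_j$ are the codimension-one irreducible components of $E$ and $\lambda_j\ge 0$, not all zero. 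The $d$-exactness of $S$ translates into the vanishing of the real class $\sum_j\lambda_j\,\{E_j\}$ in $H^2(M,\mathbb{R})$.

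The main obstacle is to rule out this last possibility: one must show that no nontrivial nonnegative combination of $f$-exceptional irreducible divisors can be cohomologically trivial on $M$. This is a structural input on modifications, typically obtained from the negativity of the intersection pairing on exceptional divisors (in the spirit of Grauert's contractibility criterion), or alternatively by invoking the weak factorization theorem of Abramovich--Karu--Matsuki--W{\l}odarczyk to reduce to the case of blow-ups along smooth centers, where the classes $\{E_j\}$ are well known to be linearly independent modulo $f^*H^2(N,\mathbb{R})$. The secondary technical point in the first direction, namely that the trivial extension of the pulled-back current is closed and realizes the pullback cohomology class, is handled by the standard theory of positive closed currents under proper modifications.
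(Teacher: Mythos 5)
The paper offers no proof of this statement: it is imported verbatim from Popovici (\cite{Pop2}, Theorem 1.3), so there is no internal argument to compare against, and your proposal has to stand on its own. Its outline does follow the strategy of Popovici's original proof (reduce to the current-theoretic criterion of Theorem \ref{sGc}, transport currents across $f$, and analyze what survives on the exceptional set), but there are two gaps. The first, in your direction ``$M$ strongly Gauduchon $\Rightarrow N$ strongly Gauduchon'', is fixable but is a genuine error as written: the trivial extension $\widetilde{T}$ of $(f|_{M\setminus E})^{*}(T|_{N\setminus f(E)})$ is indeed a nonzero closed positive $(1,1)$-current by Skoda--El Mir, but its de Rham class is \emph{not} $f^{*}\{T\}$ in general. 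The pullback $f^{*}T$ defined through local plurisubharmonic potentials satisfies $f^{*}T=\widetilde{T}+\sum_{j}\nu_{j}[E_{j}]$, where $\nu_{j}\geq 0$ is the generic Lelong number of $f^{*}T$ along the exceptional prime divisor $E_{j}$; hence $\{\widetilde{T}\}=f^{*}\{T\}-\sum_{j}\nu_{j}\{E_{j}\}$, which need not vanish when $T$ has positive Lelong numbers along the centers. You should work with $f^{*}T$ itself, which is positive, closed, nonzero, and has class $f^{*}\{T\}=0$, exactly what the contradiction with Theorem \ref{sGc} requires.

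The essential gap is the last step of your direction ``$N$ strongly Gauduchon $\Rightarrow M$ strongly Gauduchon''. After reducing (correctly, via $f_{*}S=0$, the support argument, and the decomposition of Theorem \ref{support} --- which, not Siu's decomposition, is the appropriate tool here) to a current $S=\sum_{j}\lambda_{j}[E_{j}]$ with $\lambda_{j}\geq 0$ not all zero, the claim that such a current cannot be $d$-exact is precisely the hard core of Popovici's theorem, and you have only asserted it. Neither of your suggested routes is a proof: Grauert-type negativity of exceptional intersection forms is a surface (or projective/K\"ahler) statement and does not directly apply to an arbitrary modification of compact complex $n$-folds; and although weak factorization is available for bimeromorphic maps of compact complex manifolds, the exceptional divisors of $f$ are not in any transparent way the exceptional divisors of the individual smooth blow-ups in the zigzag, so the reduction does not go through as sketched. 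Note also that the hypothesis that $N$ is strongly Gauduchon plays no role in your treatment of the residual case $f_{*}S=0$, which signals that what is missing is an unconditional structural lemma about modifications that genuinely has to be proved; until it is supplied, this direction is not established.
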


On the other hand, in \cite{Fu1}, A. Fujiki generalized the concept ``K\"ahler" to general complex spaces. A kind of generalization is the \emph{K\"ahler space} which is a complex space admitting a strictly positive closed $(1, 1)$-form and the other kind is the \emph{Fujiki class $\mathscr{C}$} consisting of the reduced compact complex spaces  which are the meoromorphic images of a compact K\"ahler spaces. In \cite{V1} and \cite{V2}, J. Varouchas proved that any reduced complex space in the Fujiki class $\mathscr{C}$ has a proper modification which is a compact K\"ahler manifold. Now, many authors use it as the definition of the Fujiki class $\mathscr{C}$. Inspired by the method of A. Fujiki and the theorem of J. Varouchas, we give two kinds of generalization of strongly Gauduchon manifolds to complex spaces: the strongly Gauduchon spaces and  class $\mathscr{SG}$. In view of definitions of them, the strongly Gauduchon spaces (see Definition \ref{def sG}) is similar to the K\"ahler spaces and the class $\mathscr{SG}$ (see Definition \ref{classSG}) is similar to the Fujiki class $\mathscr{C}$.

In section $2$, we study the properties of strongly Gauduchon spaces and give a method of constructing examples which are singular strongly Gauduchon spaces, but not in $\mathscr{B}$, where $\mathscr{B}$ is the set of reduced compact complex spaces which are bimeromorphic to compact balanced manifolds.

In section $3$, we study the class $\mathscr{SG}$ and propose a conjecture on the relation between strongly Gauduchon spaces and the class $\mathscr{SG}$ as follows.
\begin{conjecture}
Any strongly Gauduchon space belongs to class $\mathscr{SG}$.
\end{conjecture}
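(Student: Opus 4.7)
The plan is to take a resolution of singularities $\pi\colon\tilde X\to X$ (which exists by Hironaka and is a proper modification), and to try to show that $\tilde X$ is a compact strongly Gauduchon manifold. If this succeeds, then $X$ automatically sits in class $\mathscr{SG}$ by the Varouchas-style definition, which is the analog we want for strongly Gauduchon spaces.

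My strategy would be the characterization by currents, Theorem \ref{sGc}. Assume for contradiction that $\tilde X$ carries a nonzero positive $d$-exact $(1,1)$-current $T$. The pushforward $\pi_{*}T$ is a positive current of bidimension $(n-1,n-1)$ on $X$, still $d$-exact because $\pi$ is proper and pushforward commutes with $d$. Before going further, one should establish the analog of Theorem \ref{sGc} for strongly Gauduchon spaces: namely that $X\in\mathscr{SG}$-candidates of this type admit no nonzero positive $d$-exact $(1,1)$-current. Popovici's proof in the smooth case is a Hahn--Banach separation between the cone of $d$-exact currents and the cone of strictly positive $(n-1,n-1)$-forms, and a parallel argument on local embeddings and gluing should port to the space setting once the definition of strongly Gauduchon space in terms of local $(n-1,n-1)$-data is unpacked.

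Granting such an analog, $\pi_{*}T=0$, so the support theorem forces $T$ to be supported in the exceptional locus $E=\pi^{-1}(\mathrm{Sing}\,X)$. One would then try to apply the Siu--Demailly-type structure theorems for positive closed currents carried by an analytic set: write $T=\sum_{j}\nu_j[E_j]+R$ with $E_j$ the irreducible components of $E$, and use the $d$-exactness to derive numerical constraints on the Lelong numbers $\nu_j$, ideally forcing them all to vanish. Alternatively, if the single resolution $\tilde X$ is not enough, one can imitate Varouchas' approach literally and build a further sequence of blow-ups along the singular locus designed so that the locally defined strongly Gauduchon data on $X$ glue, after explicit $\bar\partial$-exact corrections in the exceptional divisors, into a global strongly Gauduchon form on the final blow-up.

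The hard part will be controlling the exceptional contribution of $T$, or equivalently producing the explicit corrections on the blow-ups. In the Kähler case Varouchas exploits the fact that Kähler forms are locally $dd^{c}$ of strictly plurisubharmonic potentials, so the gluing is reduced to a max-regularization of plurisubharmonic functions; no such potential-theoretic description is available for strongly Gauduchon metrics, since the defining condition is on $(n-1,n-1)$-forms and is a weaker, non-pluripotential-type positivity. As a consequence the dual cone of $d$-exact currents is wider, and the rigidity that kills exceptionally-supported Kähler currents is lost in the strongly Gauduchon setting. Surmounting this gap, either by a new pluripotential surrogate at the level of $(n-1,n-1)$-forms or by a direct Lelong-number analysis on the exceptional divisor, is presumably the reason the statement is proposed here only as a conjecture.
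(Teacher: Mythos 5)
The statement you are addressing is stated in the paper only as a conjecture (Conjecture \ref{conj}); the paper does not prove it, and neither do you --- as you concede in your final paragraph. So the honest verdict is that your proposal is a correct diagnosis of the difficulty rather than a proof, and the place where you stop is exactly where the paper stops: after localizing a putative nonzero positive $d$-exact $(1,1)$-current $T$ on a desingularization $\widetilde{X}$ to the exceptional set $E$ and writing $T=\sum_i c_i[E_i]$ via the Alessandrini--Bassanelli support theorem (Theorem \ref{support}), there is no general mechanism to force $c_i=0$. The paper's three partial results each inject an extra hypothesis precisely at this point: injectivity of $i:H^{1,1}_{BC}(\widetilde{X})\to H^{1,1}_{A}(\widetilde{X})$, which gives $T=i\partial\overline{\partial}Q$ with $Q$ plurisubharmonic, hence constant (Theorem \ref{relation}); normality plus $b_{2n-1}(X)=0$, which makes the classes $[E_i]$ linearly independent in $H_{2n-2}(\widetilde{X},\mathbb{R})$ via Lemma \ref{exact} (Theorem \ref{betti}); or $\mathrm{codim}\,E\geq 2$, so that there are no $(n-1)$-dimensional components at all (Theorem \ref{exc-relation}).

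Two corrections to your route to the decomposition. First, you do not need, and should not detour through, a Hahn--Banach analog of Theorem \ref{sGc} on the singular space $X$: the paper obtains $\mathrm{supp}\,T\subseteq E$ directly by pairing $T$ against $\pi^*\Omega$, where $\Omega$ is a real closed $(2n-2)$-form on $X$ whose $(n-1,n-1)$-part is strictly positive; $d$-exactness of $T$ gives $T(\pi^*\Omega)=0$, while strict positivity of $\pi^*\Omega^{n-1,n-1}$ off $E$ forces the support inclusion. This sidesteps the genuinely delicate duality and closed-cone issues on singular spaces that your sketch only gestures at. Second, it is not ``the support theorem'' that localizes $T$ to $E$ --- that localization comes from the pairing (equivalently, from $\pi$ being an isomorphism off $E$ once one knows $\pi_*T=0$); the support theorem is what one applies afterwards to convert $\mathrm{supp}\,T\subseteq E$ into $T=\sum_i c_i[E_i]$. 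With those adjustments your outline coincides with the paper's strategy for its partial results, but the final step --- killing the exceptional coefficients without auxiliary cohomological or codimension hypotheses --- is exactly the open content of the conjecture.
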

We prove it in some special cases (see Theorem \ref{relation}, \ref{betti}, \ref{exc-relation}).

In section $4$, we study a family of reduced complex spaces over a nonsingular curve and give a theorem on the total space being in $\mathscr{SG}$.\\

\section{Strongly Gauduchon spaces}

First, we give a proposition about strongly Gauduchon manifolds which is similar to the case of balanced manifolds.
\begin{proposition}\label{sm, p}
Let $M$ and $N$ be compact complex manifolds of pure dimension.
\begin{enumerate}[leftmargin=*]
\item[(1)] If $f:M\rightarrow N$ is a holomorphic submersion  and $M$ is a strongly Gauduchon manifold, then $N$ is a strongly Gauduchon manifold.
\item[(2)] $M \times N$ is a strongly Gauduchon manifold, if and only if,  $M$ and $N$ are both strongly Gauduchon manifolds.
\end{enumerate}
\end{proposition}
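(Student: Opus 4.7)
The plan is to use the characterization in Proposition 1.1: a compact complex $k$-manifold is strongly Gauduchon if and only if it admits a strictly positive $(k-1,k-1)$-form whose $\partial$-part is $\overline{\partial}$-exact. Set $\dim M = m$ and $\dim N = n$.

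For (1), I may assume without loss of generality that $N$ is connected, so $f$ (being open with compact image in a Hausdorff space) is surjective. Let $\omega$ be a strongly Gauduchon metric on $M$ with $\partial \omega^{m-1} = \overline{\partial} \eta$. Since $M$ is compact, $f$ is proper; combined with the submersion hypothesis, fiber integration $f_*$ sends $(p,q)$-forms on $M$ to $(p-(m-n),q-(m-n))$-forms on $N$ and commutes with both $\partial$ and $\overline{\partial}$. Setting $\Omega := f_* \omega^{m-1}$ one immediately gets
\[ \partial \Omega = f_* \partial \omega^{m-1} = f_* \overline{\partial} \eta = \overline{\partial} f_* \eta. \]
For strict positivity of $\Omega$, I test against an arbitrary strictly positive $(1,1)$-form $\beta$ on $N$: by the projection formula $\Omega \wedge \beta = f_*(\omega^{m-1} \wedge f^* \beta)$. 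Because $f$ is a submersion, $f^*\beta$ is a semipositive $(1,1)$-form on $M$ of constant rank $n \geq 1$, hence nonzero, so
\[ \omega^{m-1} \wedge f^* \beta = \tfrac{1}{m}\operatorname{tr}_\omega(f^*\beta)\, \omega^m \]
is a strictly positive $(m,m)$-form on $M$. Its fiber integral over the nonempty compact fiber $f^{-1}(y)$ is therefore strictly positive at every $y \in N$, and Proposition 1.1 yields that $N$ is strongly Gauduchon.

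For (2), the ``only if'' direction is immediate by applying (1) to the two projections $p_1, p_2$. For the converse, let $\omega_M, \omega_N$ be strongly Gauduchon metrics with $\partial \omega_M^{m-1} = \overline{\partial}\eta_M$ and $\partial \omega_N^{n-1} = \overline{\partial}\eta_N$, and put $\omega := p_1^* \omega_M + p_2^* \omega_N$, a hermitian metric on $M \times N$. In the binomial expansion of $\omega^{m+n-1}$ only the indices $k = m-1, m$ contribute for degree reasons:
\[ \omega^{m+n-1} = \binom{m+n-1}{m-1} p_1^*\omega_M^{m-1} \wedge p_2^*\omega_N^{n} + \binom{m+n-1}{m} p_1^*\omega_M^{m} \wedge p_2^*\omega_N^{n-1}. \]
Since $\omega_M^m$ and $\omega_N^n$ are of top bidegree on their respective factors, they are annihilated by both $\partial$ and $\overline{\partial}$. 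Applying $\partial$ and substituting the strongly Gauduchon relations, the Leibniz rule then rewrites $\partial \omega^{m+n-1}$ as $\overline{\partial}$ of a linear combination of $p_1^*\eta_M \wedge p_2^*\omega_N^n$ and $p_1^*\omega_M^m \wedge p_2^*\eta_N$, so $\omega$ is a strongly Gauduchon metric on $M \times N$.

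The main obstacle is the positivity step in (1): one must combine the rank computation for $f^*\beta$ under a submersion, the pointwise identity $\omega^{m-1} \wedge \alpha = \tfrac{1}{m}\operatorname{tr}_\omega(\alpha)\, \omega^m$, and the elementary fact that fiber integration of a strictly positive volume form over a nonempty compact submanifold remains strictly positive. The computation in (2) is routine once the binomial expansion is written down and one observes that the ``excess'' top-form factors $\omega_M^m$ and $\omega_N^n$ drop out under $\partial$ and $\overline{\partial}$.
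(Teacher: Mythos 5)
Your proposal follows the paper's route exactly: part (1) is proved by fiber integration $f_*$ (which commutes with $\partial$ and $\overline{\partial}$), and part (2) by the product metric $p_1^*\omega_M+p_2^*\omega_N$ and the binomial expansion in which only the $\omega_M^{m-1}\wedge\omega_N^{n}$ and $\omega_M^{m}\wedge\omega_N^{n-1}$ terms survive. The only difference is that the paper simply cites Michelsohn (\cite{M}, proof of Proposition 1.9(ii)) for the strict positivity of $f_*\omega^{m-1}$, whereas you verify it directly; in that verification you should test $\Omega$ against the simple positive forms $i\alpha\wedge\bar\alpha$ ($\alpha$ a nonzero $(1,0)$-form), which generate the extremal rays of the positive cone, rather than against strictly positive $(1,1)$-forms $\beta$ (positivity against the latter only yields weak positivity in the limit) --- but your trace computation goes through verbatim since $f^*\alpha\neq 0$ when $f$ is a submersion.
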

\begin{proof}
Set $\dim M=m$, $\dim N=n$.

(1) Let $\Omega_M$ be a strictly positive $(m-1, m-1)$-form, such that $\partial\Omega_M=\overline{\partial}\alpha$, where $\alpha$ is a $(2m-2)$-form on $M$. Define
\begin{displaymath}
\Omega_N:=f_*\Omega_M.
\end{displaymath}
 By the proof of Proposition 1.9(ii) in \cite{M}, we know $\Omega_N$ is a strictly positive $(n-1, n-1)$-form. Obviously, $\partial\Omega_N=\overline{\partial}(f_*\alpha)$ is $\overline{\partial}$-exact. So $N$ is a strongly Gauduchon manifold.

(2) If $M\times N$ is a strongly Gauduchon manifold, then $M$ and $N$ are both strongly Gauduchon manifolds by (i).

Conversely, let $M$ and $N$ be both strongly Gauduchon manifolds. Suppose $\omega_M$ and $\omega_N$ are strongly Gauduchon metrics on $M$ and $N$ respectively, such that $\partial\omega_M^{m-1}=\overline{\partial}\alpha$ and $\partial\omega_N^{n-1}=\overline{\partial}\beta$, where $\alpha$ and $\beta$ are $(2m-2)$ and $(2n-2)$-form on $M$ and $N$ respectively. We define a metric on $M\times N$
\begin{displaymath}
\omega:=\omega_M+\omega_N
\end{displaymath}
then
\begin{displaymath}
\omega^{m+n-1}:=C_1\omega_M^{m-1}\wedge\omega_N^n+C_2\omega_M^m\wedge\omega_N^{n-1},
\end{displaymath}
where $C_1, C_2$ are constants. So
\begin{eqnarray*}
\partial\omega^{m+n-1}:&=&C_1\partial\omega_M^{m-1}\wedge\omega_N^n+C_2\omega_M^m\wedge\partial\omega_N^{n-1}   \\
&=&\overline{\partial}(C_1\alpha\wedge\omega_N^n+C_2\omega_M^m\wedge\beta)
\end{eqnarray*}
is $\overline{\partial}$-exact on $M\times N$. Hence $\omega$ is a strongly Gauduchon metric on $M\times N$.
\end{proof}

We recall the definitions of forms and currents on complex spaces, following \cite{K}.

Let $X$ be a reduced complex space and $X_{reg}$ the set of nonsingular points on $X$. Obviously, $X_{reg}$ is a complex manifold.

Suppose that $X$ is  an analytic subset of a complex manifold $M$. Set $I_X^{p, q}(M)= \{\alpha\in A^{p, q}(M) \mid i^*\alpha= 0\}$, where $i: X_{reg}\rightarrow M$ is the inclusion. Define $A^{p, q}(X):= A^{p, q}(M)/I_X^{p, q}(M)$. It can be easily shown that $A^{p, q}(X)$ does not depend on the embedding of $X$ into $M$. Hence, for any complex space $X$, we can define $A^{p, q}(X)$ through the local embeddings in $\mathbb{C}^N$. More precisely, we define a sheaf of germs $\mathcal{A}^{p, q}_X$ of $(p, q)$-forms on $X$ and $A^{p, q}(X)$ as the group of its global sections. Similarly, we can also define $A_c^{p, q}(X)$ (the space of $(p, q)$-forms with compact supports), $A^k(X)$ and $A_c^k(X)$.

We can naturally define $\partial:A^{p, q}(X)\rightarrow A^{p+1, q}(X)$,  $\overline{\partial}:A^{p, q}(X)\rightarrow A^{p, q+1}(X)$ and $d:A^k(X)\rightarrow A^{k+1}(X)$.

If $f: X\rightarrow Y$ is a holomorphic map between reduced complex spaces, then we can naturally define $f^*:A^{p, q}(Y)\rightarrow A^{p, q}(X)$ such that $f^*$ commutes with $\partial$, $\overline{\partial}$, $d$.

When $X$ is a subvariety of a complex manifold $M$, we define the space of currents on $X$
\begin{displaymath}
\mathcal{D}'^r(X):= \{T\in \mathcal{D}'^r(M)\mid T(u)= 0, \forall u\in I_{X, c}^{2n-r}(M)\},
\end{displaymath}
where $\mathcal{D}'^r(M)$ is the space of currents on $M$ and $I_{X, c}^{2n-r}(M)= \{\alpha\in A_c^{2n-r}(M) \mid i^*\alpha= 0\}$. We can define a space $\mathcal{D}'^r(X)$ of the currents on any reduced complex space $X$ as the case of $A^r(X)$. Define
\begin{displaymath}
\mathcal{D}'^{p, q}(X):= \{T\in \mathcal{D}'^{p+q}(X) \mid T(u)= 0, \forall u\in A_c^{r, s}(M), (r, s)\neq (n-p,n-q)\}.
\end{displaymath}
A current $T$ is called a \emph{$(p, q)$-current} on $X$, if $T\in\mathcal{D}'^{p, q}(X)$.
If $T\in \mathcal{D}'^r(X)$, we call $r$ the \emph{degree}. If $T\in \mathcal{D}'^{p, q}(X)$, we call $(p, q)$ the \emph{bidegree}. We also denote $ \mathcal{D}_r'(X)=\mathcal{D}'^{2n-r}(X)$ and $\mathcal{D}_{p, q}'(X)=\mathcal{D}'^{n-p, n-q}(X)$. A current $T\in \mathcal{D}'^{p, p}(X)$ is called \emph{real} if for every $\alpha\in A_c^{2n-2p}(X)$, $T(\overline{\alpha})= \overline{T(\alpha)}$.

If $f: X\rightarrow Y$ is a holomorphic map of reduced compact complex spaces, we define $f_*:\mathcal{D}_r'(X)\rightarrow \mathcal{D}_r'(Y)$ as $f_*T(u):= T(f^*u)$ for any  $u\in A_c^r(Y)$.

A real $(p, p)$-form $\omega$ on $X$ is called \emph{strictly positive}, if there exist an open covering $\mathcal{U}= \{U_\alpha\}$ of $X$ with an embedding $i_\alpha:U_\alpha\rightarrow V_\alpha$ of $U_\alpha$ into a domain $V_\alpha$ in $\mathbb{C}^{n_\alpha}$ and a strictly positive $(p, p)$-form $\omega_\alpha$ on $V_\alpha$, such that $\omega\mid_{U_\alpha}= i_\alpha^*\omega_\alpha$, for each $\alpha$.

Now, we give a kind of generalization of strongly Gauduchon manifolds.
\begin{definition}\label{def sG}
A purely $n$-dimensional reduced compact complex space $X$ is called a strongly Gauduchon space, if there exists a strictly positive $(n-1, n-1)$-form $\Omega$, such that $\partial\Omega$ is $\overline{\partial}$-exact.
\end{definition}
By its definition, it is easy to see that $X$ is a strongly Gauduchon space, if and only if, there exists a real closed $(2n-2)$-form $\Omega'$ on $X$ whose $(n-1, n-1)$-component $\Omega'^{n-1, n-1}$ is strictly positive. Indeed, if $\Omega$ is a strictly positive $(n-1, n-1)$-form , such that $\partial\Omega=\overline{\partial}\alpha$, where $\alpha$ is a $(n, n-2)$-form, then
\begin{displaymath}
\Omega':=\Omega-\alpha-\bar{\alpha}
\end{displaymath}
is the desired form. Conversely, since $\Omega'$ is real and $d$-closed, $\partial\Omega'^{n-1, n-1}=-\overline{\partial}\Omega'^{n, n-2}$. Hence, $\Omega:=\Omega'^{n-1, n-1}$ is the desired form.

Obviously, strongly Gauduchon manifolds and compact balanced spaces are strongly Gauduchon spaces.

\begin{proposition}
Let $X$ be a reduced compact complex space of pure dimension and $M$ a compact complex manifold of pure dimension. If $X\times M$ is a strongly Gauduchon space, then $M$ is a strongly Gauduchon manifold.
\end{proposition}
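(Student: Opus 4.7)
Write $m=\dim X$ and $n=\dim M$, and let $\pi:X\times M\to M$ denote the second projection. Suppose $\Omega$ is a strictly positive $(m+n-1,m+n-1)$-form on $X\times M$ satisfying $\partial\Omega=\overline{\partial}\alpha$ for some $(m+n,m+n-2)$-form $\alpha$. The plan, mirroring the submersion case of Proposition \ref{sm, p}(1), is to produce a strongly Gauduchon form on $M$ by fiber integration along $\pi$, namely $\Omega_M:=\pi_*\Omega$, and to verify that $\Omega_M$ has the two required properties.

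To justify the construction, note that $\pi$ restricts to a proper holomorphic submersion of complex manifolds on the open dense subset $X_{reg}\times M\subset X\times M$, whose complement $X_{sing}\times M$ has measure zero. Fiber integration of $\Omega$ along $\pi$ is therefore unambiguously defined and produces a form on $M$ of bidegree $(n-1,n-1)$, since the complex fiber dimension is $m$. Smoothness of $\Omega_M$ on $M$ follows by standard parameter-integral arguments: locally embed $X$ into some $\mathbb{C}^N$, realize $\Omega$ as the restriction of a smooth form on $\mathbb{C}^N\times M$, and differentiate under the integral sign in local charts on $M$.

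The key step is strict positivity of $\Omega_M$, which I expect to be the main obstacle. In the manifold case of Proposition \ref{sm, p}(1), this was handled by citing \cite{M}, Proposition 1.9(ii). The same local argument goes through here if we restrict to $X_{reg}\times M$: strict positivity is a pointwise condition on $M$, and at each $y\in M$ the pairing of $\Omega_M(y)$ with a decomposable strictly positive $(n-1,n-1)$-vector at $y$ unfolds, via the definition of $\pi_*$, into an integral over $X_{reg}$ of a nonnegative smooth function that is strictly positive on a dense open subset, because $\Omega$ is strictly positive on $X_{reg}\times M$. The technical care consists in checking that ignoring the measure-zero singular locus $X_{sing}\times\{y\}$ does no harm, and that the Michelsohn argument survives this restriction.

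Once strict positivity of $\Omega_M$ is in hand, the rest is routine. Fiber integration commutes with $\partial$ and $\overline{\partial}$ (no boundary term appears because $X_{sing}$ has real codimension at least two in $X$), so
\begin{displaymath}
\partial\Omega_M=\pi_*(\partial\Omega)=\pi_*(\overline{\partial}\alpha)=\overline{\partial}(\pi_*\alpha),
\end{displaymath}
and $\Omega_M$ is a strictly positive $(n-1,n-1)$-form on the compact complex manifold $M$ with $\overline{\partial}$-exact $\partial$-image. This exhibits $M$ as a strongly Gauduchon manifold.
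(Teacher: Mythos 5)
Your proposal is correct and follows essentially the same route as the paper: restrict to $X_{reg}\times M$, push $\Omega$ forward along the second projection, invoke the argument of Proposition 1.9(ii) in \cite{M} for strict positivity, and observe that fiber integration turns $\partial\Omega=\overline{\partial}\alpha$ into $\partial\Omega_M=\overline{\partial}(\pi_*\alpha)$. The paper states these steps more tersely, but the substance is identical.
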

\begin{proof}
Let $X_{reg}$ be the set of nonsingular points on $X$ and $\Omega$ a strictly positive $(n+m-1, n+m-1)$-form on $X\times M$, such that $\partial\Omega$ is $\overline{\partial}$-exact, where $n=\dim X$ and $m=\dim M$.  Suppose $\pi: X_{reg}\times M\rightarrow M$ is the second projection. By the proof of Proposition 1.9(ii) in \cite{M}, we know $\pi_*(\Omega\mid_{X_{reg}\times M})$ is a strictly positive $(m-1, m-1)$-form on $M$. Obviously, $\partial\pi_*(\Omega\mid_{X_{reg}\times M})$ is $\overline{\partial}$-exact. So $M$ is a strongly Gauduchon manifold.
\end{proof}

We know that, on a compact balanced manifold $M$, the fundamental class $[V]$ of any hypersurface $V$ is not zero in $H^2(M, \mathbb{R})$ (see \cite{M}, Corollary 1.7). It is equivalent to that, the current $[V]$ on $M$ defined by any hypersurface $V$ is not $d$-exact. For strongly Gauduchon spaces, we have following proposition.
\begin{proposition}
If $X$ is a strongly Gauduchon space, then the current $[V]$ defined by any hypersurface $V$ of $X$ is not $\partial\overline{\partial}$-exact.
\end{proposition}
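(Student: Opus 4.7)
The plan is a duality/pairing argument using the equivalent formulation given in Definition \ref{def sG}: take a strictly positive $(n-1,n-1)$-form $\Omega$ on $X$ with $\partial\Omega=\overline{\partial}\alpha$ for some $(n,n-2)$-form $\alpha$. The first observation I would record is that such an $\Omega$ is automatically $\partial\overline{\partial}$-closed, since $\overline{\partial}\partial\Omega=\overline{\partial}\,\overline{\partial}\alpha=0$, and hence $\partial\overline{\partial}\Omega=0$ as well. This makes $\Omega$ a good test form against any $\partial\overline{\partial}$-exact current of the complementary bidegree.

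Next I would argue by contradiction. Assuming $[V]=\partial\overline{\partial}S$ for some $(n-2,n-2)$-current $S$ on $X$, I would pair both sides with the smooth form $\Omega$. The left-hand side gives
\[
\langle [V],\Omega\rangle \;=\; \int_{V_{\mathrm{reg}}}\Omega \;>\;0,
\]
since $\Omega$ restricts to a strictly positive top-form on the $(n-1)$-dimensional regular locus of the (nonempty) subvariety $V$. The right-hand side, by transferring the operators to the test form in the usual way, gives
\[
\langle \partial\overline{\partial}S,\Omega\rangle \;=\; \langle S,\overline{\partial}\partial\Omega\rangle \;=\; 0.
\]
These two evaluations contradict each other, so no such $S$ can exist.

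The main point that needs care, rather than a genuine obstacle, is justifying the pairing and the adjoint identity on the possibly singular space $X$: smooth forms, currents, and the differentials $\partial$ and $\overline{\partial}$ are all defined through the local-embedding framework recalled in Section 2, so the duality formula $\langle \partial\overline{\partial}S,\Omega\rangle = \langle S,\overline{\partial}\partial\Omega\rangle$ is just the ordinary adjointness read through a local embedding into $\mathbb{C}^N$. Similarly, one needs to know that a hypersurface $V\subset X$ (possibly meeting the singular locus of $X$) defines a bidegree $(1,1)$ current that pairs with smooth $(n-1,n-1)$-forms by integration over $V_{\mathrm{reg}}$, which is standard via King's theorem on currents of integration along subvarieties. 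Once these conventions are in place, the argument is a one-line computation.
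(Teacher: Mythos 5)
Your proposal is correct and follows essentially the same argument as the paper: test the assumed identity $[V]=\partial\overline{\partial}Q$ against the strictly positive $(n-1,n-1)$-form $\Omega$ with $\partial\Omega=\overline{\partial}\alpha$, obtaining $\int_V\Omega>0$ on one side and $\pm Q(\overline{\partial}\partial\Omega)=\pm Q(\overline{\partial}\overline{\partial}\alpha)=0$ on the other. The extra care you take in justifying the pairing and adjointness through local embeddings is a reasonable elaboration of conventions the paper leaves implicit, not a different method.
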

\begin{proof}
Suppose $\dim X=n$. Let $\Omega$ be a strictly positive $(n-1, n-1)$-form on $X$ such that $\partial\Omega=\overline{\partial}\alpha$, where $\alpha$ is a $(2n-2)$-form on $X$. If $[V]=\partial\overline{\partial}Q$ for some current $Q$ on $X$, then
\begin{displaymath}
[V](\Omega)=\int_V\Omega>0.
\end{displaymath}
On the other hand,
\begin{displaymath}
[V](\Omega)=(\partial\overline{\partial}Q)(\Omega)=-Q(\overline{\partial}\partial\Omega)=-Q(\overline{\partial}\overline{\partial}\alpha)=0.
\end{displaymath}
It is a contradiction.
\end{proof}

\begin{proposition}\label{loc}
If $f:X\rightarrow Y$ is a finite holomorphic unramified covering map of reduced compact complex spaces of pure dimension, then $X$ is a strongly Gauduchon space if and only if $Y$ is a strongly Gauduchon space.
\end{proposition}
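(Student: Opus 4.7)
The plan is to prove the two implications separately, using pullback for one direction and pushforward for the other.

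For the direction $Y$ strongly Gauduchon $\Rightarrow X$ strongly Gauduchon, I would pull back. Given a strictly positive $(n-1,n-1)$-form $\Omega$ on $Y$ with $\partial\Omega=\overline{\partial}\alpha$, set $\widetilde{\Omega}:=f^*\Omega$ and $\widetilde{\alpha}:=f^*\alpha$ on $X$. Since $f$ is a finite unramified covering, it is a local biholomorphism in the sense that every $y\in Y$ has a neighborhood $V$ with $f^{-1}(V)=\bigsqcup_{i=1}^k U_i$ and $f|_{U_i}:U_i\to V$ a biholomorphism; under local embeddings of $V$ and $U_i$ into $\mathbb{C}^N$, strict positivity is preserved by $f^*$. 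Since $f^*$ commutes with $\partial,\overline{\partial}$, we get $\partial\widetilde{\Omega}=\overline{\partial}\widetilde{\alpha}$, so $X$ is strongly Gauduchon.

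For the converse, I would push forward. The paper defines $f_*$ only for currents, so the preliminary step is to define a pushforward on smooth forms for a finite unramified covering. Using the local trivialization $f^{-1}(V)=\bigsqcup_{i=1}^k U_i$ above, I would set
\begin{displaymath}
(f_*\Omega)|_V := \sum_{i=1}^k \bigl((f|_{U_i})^{-1}\bigr)^*\bigl(\Omega|_{U_i}\bigr),
\end{displaymath}
and check this is independent of the chosen trivialization, hence patches to a global $(p,q)$-form on $Y$. By construction $f_*$ commutes with $\partial$ and $\overline{\partial}$. Given a strongly Gauduchon structure $\Omega$ on $X$ with $\partial\Omega=\overline{\partial}\alpha$, the form $f_*\Omega$ is a sum of pullbacks of strictly positive $(n-1,n-1)$-forms by biholomorphisms, hence strictly positive on each $V$, and $\partial(f_*\Omega)=f_*\overline{\partial}\alpha=\overline{\partial}(f_*\alpha)$, so $Y$ is strongly Gauduchon.

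The easier direction is genuinely easy; the slightly more delicate step is the construction of the form-level pushforward, but because $f$ is unramified this is essentially the classical local construction of averaging over sheets, and it fits cleanly with the sheaf-theoretic definition of $\mathcal{A}^{p,q}_X$ via local embeddings since each sheet $U_i$ can be embedded via the same chart as its image $V$. The only point to verify with care is that a finite sum of strictly positive $(n-1,n-1)$-forms (each arising as a biholomorphic pullback of a form coming from a local embedding) is again strictly positive in the sense of the definition given in the paper, which follows at once from the fact that strict positivity is a pointwise property preserved under addition and under biholomorphic pullback.
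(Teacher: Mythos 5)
Your proposal is correct and follows essentially the same route as the paper: pullback of the strongly Gauduchon form for the direction $Y\Rightarrow X$, and the sheet-summing (trace) pushforward $\sum_i \bigl((f|_{U_i})^{-1}\bigr)^*$ on local trivializations, patched over overlaps, for the direction $X\Rightarrow Y$. The points you flag for verification (independence of the trivialization, preservation of strict positivity under biholomorphic pullback and finite sums) are exactly the ones the paper checks.
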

\begin{proof}
Set $n=\dim X=\dim Y$ and $d=\deg f$.

Let $X$ be a strongly Gauduchon space and $\Omega_X$ a strictly positive $(n-1, n-1)$-form on $X$ such that $\partial\Omega_X=\overline{\partial}\alpha_X$, where $\alpha_X$ is a $2(n-1)$-form on $X$. For every $y\in Y$, we set $f^{-1}(y)=\{x_1, ..., x_d\}$, then there exists an open neighbourhood $V\subseteq Y$ of $y$, and open neighbourhoods $U_1$, $...$, $U_d$ of $x_1, ..., x_d$ in $X$ respectively, which do not intersect with each other, such that $f^{-1}(V)= \cup_{i=1}^d {U_i}$ and the restriction $f|_{U_i}:U_i\rightarrow V$ is an isomorphism for $i=1,...,d$. We define two forms on $V$ as
\begin{displaymath}
\Omega_V:=\Sigma_{i=1}^d(f|_{U_i}^{-1})^*(\Omega_X|_{U_i})
\end{displaymath}
\begin{displaymath}
\alpha_V:=\Sigma_{i=1}^d(f|_{U_i}^{-1})^*(\alpha_X|_{U_i})
\end{displaymath}
If $V$ and $V'$ are two open subsets in Y as above (possible for different points in $Y$) and $V\cap V'\neq \emptyset$, we can easily check $\Omega_V= \Omega_{V'}$ on $V\cap V'$. Hence we can construct a global $(n-1, n-1)$-form $\Omega_Y$ on $Y$ such that $\Omega_Y|_V= \Omega_V$. By the same reason, we can define a global $2(n-1)$-form $\alpha_Y$ on $Y$ such that $\alpha_Y|_V= \alpha_V$.  Obviously, $\Omega_Y$ is strictly positive and $\partial\Omega_Y=\overline{\partial}\alpha_Y$. Therefore, $Y$ is a strongly Gauduchon space.

Conversely, suppose $\Omega_Y$ is a strictly positive $(n-1, n-1)$-form on $Y$, such that $\partial\Omega_Y$ is $\overline{\partial}$-exact on $Y$. For all $x\in X$, there is an open neighbourhood $U$ of $x$ in $X$, an open neighbourhood $V$ of $f(x)$ in $Y$, such that $f\mid_U:U\rightarrow V$ is an isomorphism.  $(f^*\Omega_Y)|{U}=(f|_{U})^*(\Omega_Y|_{V})$ is obviously strictly positive on $U$, so is $f^*\Omega_Y$ on $X$. Obviously, $f^*\Omega_Y$ is $\overline{\partial}$-exact on $X$.  Therefore, $X$ is a strongly Gauduchon space.
\end{proof}

\section{The class $\mathscr{SG}$}
Now, we give the other generalization of strongly Gauduchon manifolds.
\begin{definition}\label{classSG}
A reduced  compact complex space $X$ of pure dimension is called in class $\mathscr{SG}$, if it has a desingularization $\widetilde{X}$ which is a strongly Gauduchon manifold.
\end{definition}
If one desingularization of $X$ is a strongly Gauduchon manifold, then every desingularization of $X$ is a strongly Gauduchon manifold. Indeed, if $X_1\to{X}$ and $X_2\to X$ are two desingularizations of $X$, then there exists a bimeromorphic map  $f: X_1\dashrightarrow X_2$. Let $\Gamma\subseteq X_1\times X_2$ be the graph of $f$, and $p_1:\Gamma\rightarrow X_1$, $p_2:\Gamma\to X_2$ the two projections on $X_1$, $X_2$, respectively. Then $p_1$, $p_2$ are modifications. If $\widetilde{\Gamma}$ is a desingularization of $\Gamma$, then $\widetilde{\Gamma}\to X_1$ and $\widetilde{\Gamma}\to X_2$ are modifications of compact complex manifolds. By Theorem \ref{msG}, we know that $X_1$ is  a strongly Gauduchon manifold if and only if $\widetilde{\Gamma}$ is a strongly Gauduchon manifold, and then if and only if $X_2$ is a strongly Gauduchon manifold. Hence Definition 2.1 is not dependent on the choice of the desingularization of $X$. So, if $X\in \mathscr{SG}$ is nonsingular, then $X$ is a strongly Gauduchon manifold.

Using the same method as above, we can prove the following proposition.

\begin{proposition}\label{bSG}
 The class $\mathscr{SG}$ is invariant under bimeromorphic maps.
\end{proposition}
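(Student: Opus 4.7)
The plan is to mimic the argument just used to show that Definition \ref{classSG} does not depend on the choice of desingularization, but now with two different base spaces linked by a bimeromorphic map.

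Let $f : X \dashrightarrow Y$ be a bimeromorphic map of reduced compact complex spaces of pure dimension, and assume $X \in \mathscr{SG}$. Pick desingularizations $\pi_X : \widetilde{X} \to X$ and $\pi_Y : \widetilde{Y} \to Y$. By definition of $\mathscr{SG}$, $\widetilde{X}$ is a strongly Gauduchon manifold, and I want to show the same for $\widetilde{Y}$. The composition $\pi_Y^{-1} \circ f \circ \pi_X : \widetilde{X} \dashrightarrow \widetilde{Y}$ is a bimeromorphic map between compact complex manifolds; let $\Gamma \subseteq \widetilde{X} \times \widetilde{Y}$ be its graph and $\widetilde{\Gamma}$ a desingularization of $\Gamma$. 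Then the two induced holomorphic maps $p_1 : \widetilde{\Gamma} \to \widetilde{X}$ and $p_2 : \widetilde{\Gamma} \to \widetilde{Y}$ are modifications of compact complex manifolds.

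Now I apply Theorem \ref{msG} twice. From the modification $p_1 : \widetilde{\Gamma} \to \widetilde{X}$ and the assumption that $\widetilde{X}$ is strongly Gauduchon, I conclude that $\widetilde{\Gamma}$ is strongly Gauduchon. Then from the modification $p_2 : \widetilde{\Gamma} \to \widetilde{Y}$, I conclude that $\widetilde{Y}$ is strongly Gauduchon. Hence $Y \in \mathscr{SG}$, and by symmetry the same argument shows $X \in \mathscr{SG}$ whenever $Y \in \mathscr{SG}$.

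The only mildly delicate point is the first step: verifying that the rational map $\pi_Y^{-1} \circ f \circ \pi_X$ is genuinely bimeromorphic between the smooth models, so that its graph resolution gives modifications in both directions. This is standard — each $\pi_X$, $\pi_Y$ is a modification, hence bimeromorphic, and composition of bimeromorphic maps is bimeromorphic — but it is worth stating explicitly, since everything else reduces to invoking the already established Theorem \ref{msG} on the manifold level.
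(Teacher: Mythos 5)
Your proof is correct and follows exactly the route the paper intends: the paper simply says ``using the same method as above,'' referring to the graph-resolution argument showing Definition \ref{classSG} is independent of the desingularization, and you reproduce that argument verbatim for two different base spaces, reducing to two applications of Theorem \ref{msG}. Nothing is missing.
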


Obviously, strongly Gauduchon manifolds and the normalizations of complex spaces in class $\mathscr{SG}$ are in class $\mathscr{SG}$. Recall that a reduced compact complex space $X$ is called \emph{in class $\mathscr{B}$}, if it has a desingulariztion  $\widetilde{X}$ which is a balanced manifold, referring to \cite{FMX}. Then complex spaces in class $\mathscr{B}$ are in class $\mathscr{SG}$.

\begin{proposition}\label{sGp}
If $X$ and $Y$ are reduced compact complex spaces, then $X\times Y$ is in the class $\mathscr{SG}$ if and only if $X$ and $Y$ are both in the class $\mathscr{SG}$.
\end{proposition}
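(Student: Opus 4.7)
The plan is to reduce everything to Proposition \ref{sm, p}(2) for compact complex manifolds via a desingularization. The key geometric input is that the product of two desingularizations is a desingularization of the product; once this is in hand, both directions of the ``iff'' fall out immediately.

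For the ``if'' direction, I would start with desingularizations $\pi_X:\widetilde{X}\to X$ and $\pi_Y:\widetilde{Y}\to Y$ where, by hypothesis, $\widetilde{X}$ and $\widetilde{Y}$ are strongly Gauduchon manifolds. The product map $\pi_X\times\pi_Y:\widetilde{X}\times\widetilde{Y}\to X\times Y$ is proper, and it restricts to a biholomorphism over the complement of the analytic subset $(X\times Y_{sing})\cup(X_{sing}\times Y)$, so it is a proper modification from a compact complex manifold, i.e. a desingularization of $X\times Y$. Proposition \ref{sm, p}(2) then says $\widetilde{X}\times\widetilde{Y}$ is a strongly Gauduchon manifold, so by Definition \ref{classSG} we get $X\times Y\in\mathscr{SG}$.

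For the ``only if'' direction, I would pick any desingularizations $\widetilde{X}\to X$ and $\widetilde{Y}\to Y$; by the same argument as above, $\widetilde{X}\times\widetilde{Y}\to X\times Y$ is a desingularization. Since $X\times Y\in\mathscr{SG}$ and the $\mathscr{SG}$-property is independent of the chosen desingularization (the paragraph following Definition \ref{classSG}), the smooth space $\widetilde{X}\times\widetilde{Y}$ is itself a strongly Gauduchon manifold. Proposition \ref{sm, p}(2), read in the reverse direction, then forces both $\widetilde{X}$ and $\widetilde{Y}$ to be strongly Gauduchon manifolds, so $X,Y\in\mathscr{SG}$. (Alternatively one may use Proposition \ref{sm, p}(1) applied to the two coordinate projections, which are holomorphic submersions of compact complex manifolds.)

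The only step that is not a direct appeal to earlier results is the assertion that $\pi_X\times\pi_Y$ is a desingularization, and I expect this to be the main obstacle to state cleanly. It amounts to checking that the fiber product of a proper modification with the identity is again a proper modification, and that the composition of two proper modifications is a proper modification; both facts are standard but should be invoked explicitly. With that in hand, neither direction requires any new analytic computation beyond Propositions \ref{sm, p} and \ref{bSG}.
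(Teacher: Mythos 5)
Your proposal is correct and follows essentially the same route as the paper: both form the product desingularization $\widetilde{X}\times\widetilde{Y}\to X\times Y$ and then invoke Proposition \ref{sm, p}(2) together with the independence of the $\mathscr{SG}$-property from the choice of desingularization. The paper states these steps more tersely, but the argument is the same.
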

\begin{proof}
If $f:\widetilde{X}\rightarrow X$ and $g:\widetilde{Y}\rightarrow Y$ are desingulariztions, then $f\times g:\widetilde{X}\times \widetilde{Y}\rightarrow X\times Y$ is a desingulariztion of $X\times Y$. By Proposition \ref{sm, p}(ii), we know that $\widetilde{X}\times\widetilde{Y}$ is a strongly Gauduchon manifold if and only if $\widetilde{X}$ and $\widetilde{Y}$ are both strongly Gauduchon manifolds. So we get this proposition easily.
\end{proof}

Using this proposition, we can construct some examples of complex spaces in $\mathscr{SG}$ which are neither strongly Gauduchon manifolds nor in class $\mathscr{B}$. If $Y$ is a singular reduced compact complex space in class $\mathscr{B}$ and $Z$ is a compact strongly Gauduchon manifold but not a balanced manifold, then $Y\times Z$ is in $\mathscr{SG}$, but it is neither a strongly Gauduchon manifold nor in $\mathscr{B}$. Indeed, $Y\times Z$ is singular, so it is not a  strongly Gauduchon manifold. By Proposition \ref{sGp}, $Y\times Z\in \mathscr{SG}$. Assume $Y\times Z\in \mathscr{B}$, by \cite{FMX}, Proposition 2.3, we know $Z\in \mathscr{B}$. Since $Z$ is nonsingular, $Z$ is balanced, which contradicts the choice of $Z$. Hence we get the following relations
\begin{displaymath}
\mathscr{C}\subsetneqq \mathscr{B}\subsetneqq \mathscr{SG},
\end{displaymath}
where $\mathscr{C}$ is the Fujiki class and the first ``$\subsetneqq$" is proved in \cite{FMX}, Section 2 .

If $X$ is a reduced compact complex space of pure dimension, then $X \in\mathscr{SG}$ if and only if every irreducible component of $X$ is in $\mathscr{SG}$. Indeed, if let  $\widetilde{X_1}$, $\ldots$, $\widetilde{X_r}$ be the desingulariztions of $X_1$, $\ldots$, $X_r$,  all the irreducible components of $X$, then the disjoint union $\widetilde{X}$:=$\widetilde{X_1} \amalg \ldots \amalg \widetilde{X_r}$ is a desingulariztion of $X$. Hence the conclusion follows since $\widetilde{X}$ is a strongly Gauduchon manifold if and only if $\widetilde{X_1}$, $\ldots$, $\widetilde{X_r}$ are all strongly Gauduchon manifolds.

In the following, we need the definition of a smooth morphism, referring to \cite{D}, (0.4). A surjective holomorphic map $f:X\rightarrow Y$ between reduced complex spaces is called a \emph{smooth} morphism, if for all $x\in X$, there is an open neighbourhood $W$ of $x$ in $X$, an open neighbourhood $U$ of $f(x)$ in $Y$, such that $f(W)= U$ and there is a commutative diagram
\begin{displaymath}
\xymatrix{
  W \ar[d]_{g} \ar[r]^{f|_W} &  U      \\
  \Delta^r\times U \ar[ur]_{pr_2}     }
\end{displaymath}
where $r=\dim X-\dim Y$, $g$ is an isomorphism (i.e., biholomorphic map), $pr_2$ is the second projection, and $\Delta^r$ is a small polydisc. Moreover, if $\dim X=\dim Y$, a smooth morphism is exactly a \emph{surjective local isomorphism}.

Obviously, if $f:X\rightarrow Y$ is a smooth morphism and $Y$ is a complex manifold, then $X$ must also be a complex manifold and $f$ is a submersion between complex manifolds.

\begin{proposition}\label{sm2}
Let $f:X\rightarrow Y$ be a smooth morphism of  reduced compact complex spaces. If $X \in\mathscr{SG}$, then $Y \in\mathscr{SG}$.
\end{proposition}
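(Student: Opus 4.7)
The approach is to construct a desingularization of $X$ that admits a holomorphic submersion onto a chosen desingularization of $Y$, thereby reducing matters to the manifold case handled by Proposition~\ref{sm, p}(1).

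Fix a desingularization $\pi: \widetilde{Y} \to Y$ and form the fiber product
\[
\widetilde{X} := X \times_Y \widetilde{Y},
\]
with projections $\sigma: \widetilde{X} \to X$ and $\widetilde{f}: \widetilde{X} \to \widetilde{Y}$. The local product structure of a smooth morphism is stable under base change: in a trivializing neighbourhood $W \subseteq X$ with $W \cong \Delta^r \times U$ and $f|_W$ the projection onto $U \subseteq Y$, one gets $\sigma^{-1}(W) \cong \Delta^r \times \pi^{-1}(U)$. Since $\widetilde{Y}$ is a complex manifold, this already shows that $\widetilde{X}$ is a complex manifold and that $\widetilde{f}$ is a holomorphic submersion; compactness of $\widetilde{X}$ follows from that of $X$ together with the properness of $\pi$.

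Next I would verify that $\sigma$ is a desingularization of $X$. Smoothness of the source was just established. The same local model $\Delta^r \times U$ shows that a point $x \in X$ is singular precisely when $f(x)$ is singular on $Y$, so $X_{reg} = f^{-1}(Y_{reg})$; combined with the fact that $\pi$ is a biholomorphism over $Y_{reg}$, this yields that $\sigma$ is proper and restricts to an isomorphism over $X_{reg}$, hence is a desingularization. Because $X \in \mathscr{SG}$, the independence of the defining property from the choice of desingularization (the discussion immediately following Definition~\ref{classSG}) forces $\widetilde{X}$ to be a strongly Gauduchon manifold.

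Finally, $\widetilde{f}: \widetilde{X} \to \widetilde{Y}$ is a holomorphic submersion between compact complex manifolds of pure dimension, so Proposition~\ref{sm, p}(1) gives that $\widetilde{Y}$ is a strongly Gauduchon manifold, whence $Y \in \mathscr{SG}$. The main technical point is the manifold smoothness of $\widetilde{X}$: it rests on the paper's strong definition of smooth morphism (local triviality, not merely submersivity) in an essential way, since it is the polydisc factor $\Delta^r$ that guarantees regularity of $\widetilde{X}$ at points above $Y_{sing}$, where $\pi$ does all of the resolving. Everything else is a routine fiber-product argument.
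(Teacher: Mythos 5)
Your proposal is correct and follows essentially the same route as the paper: base-change along a desingularization $\widetilde{Y}\to Y$, observe that the fiber product is a desingularization of $X$ and that the induced map to $\widetilde{Y}$ is a submersion of compact manifolds, then invoke Proposition~\ref{sm, p}(1). The only difference is that you spell out the local-triviality argument for these two facts, which the paper instead delegates to Claims 1 and 2 in the proof of Proposition 2.4 of \cite{FMX}.
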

\begin{proof}
 Suppose $p:\widetilde{Y}\rightarrow Y$ is a desingulariztion. Consider the following Cartesian diagram
\begin{equation}\label{Car}
\xymatrix{
\widetilde{X}:=X\times_Y \widetilde{Y} \ar[d]_{q} \ar[r]^-{\tilde{f}}  & \widetilde{Y} \ar[d]^{p}  \\
  X \ar[r]_{f}  & Y},
\end{equation}
where $X\times_Y \widetilde{Y}=\{(x, \tilde y)\in X\times \widetilde{Y}| f(x)=p(\tilde y)\}$, $q$ is the  projection to $X$, and $\tilde{f}$ is the  projection to $\widetilde{Y}$. We can prove that $\tilde{f}$ is a submersion of complex manifolds and $q$ is a modification, referring to \cite{FMX}, Claim 1 and 2 in the proof of Proposition 2.4. Since $X \in\mathscr{SG}$, $\widetilde{X}$ is a strongly Gauduchon manifold, so is $\widetilde{Y}$ by Proposition \ref{sm, p} (i), hence $Y\in \mathscr{SG}$.
\end{proof}

\begin{proposition}
If $f:X\rightarrow Y$ is a finite unramified covering map of reduced compact complex spaces, then $X\in\mathscr{SG}$ if and only if $Y\in\mathscr{SG}$.
\end{proposition}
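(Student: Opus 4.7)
The plan is to imitate the proof of Proposition \ref{sm2}, using a fiber product to pass to the resolved level, and then to invoke Proposition \ref{loc} there. Pick a desingularization $p:\widetilde{Y}\to Y$ and form the Cartesian square
\begin{displaymath}
\xymatrix{
\widetilde{X}:=X\times_Y \widetilde{Y} \ar[d]_{q} \ar[r]^-{\tilde{f}}  & \widetilde{Y} \ar[d]^{p}  \\
  X \ar[r]_{f}  & Y,}
\end{displaymath}
exactly as in diagram (\ref{Car}) inside the proof of Proposition \ref{sm2}. Since $f$ is a finite unramified covering, it is in particular a local isomorphism; base change then makes $\tilde{f}$ a local isomorphism as well, so every point of $\widetilde{X}$ has a neighborhood biholomorphic to an open subset of $\widetilde{Y}$. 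This forces $\widetilde{X}$ to be a complex manifold, and $\tilde{f}:\widetilde{X}\to\widetilde{Y}$ to be a finite unramified covering map between compact complex manifolds.

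Next I would verify that $q:\widetilde{X}\to X$ is a modification, so that $\widetilde{X}$ is a desingularization of $X$. Properness is preserved under base change, so $q$ is proper. For bimeromorphicity, choose a nowhere dense analytic subset $Z\subseteq Y$ over which $p$ is an isomorphism, and set $Z':=f^{-1}(Z)\subseteq X$; since $f$ is a local isomorphism, $Z'$ is nowhere dense and $q$ restricts to an isomorphism $\widetilde{X}\setminus q^{-1}(Z')\to X\setminus Z'$. Hence $q$ is a modification. If $X$ fails to be irreducible, I would apply the construction componentwise and invoke the disjoint-union convention for desingularizations already used after Proposition \ref{sGp}.

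With these reductions in place, the conclusion follows at once: Proposition \ref{loc}, applied to the finite unramified covering $\tilde{f}:\widetilde{X}\to\widetilde{Y}$ of compact complex manifolds, gives that $\widetilde{X}$ is a strongly Gauduchon manifold if and only if $\widetilde{Y}$ is. By the discussion after Definition \ref{classSG}, membership in $\mathscr{SG}$ is independent of the chosen desingularization, and $\widetilde{X}$, $\widetilde{Y}$ are desingularizations of $X$, $Y$ respectively, so this translates into $X\in\mathscr{SG}\iff Y\in\mathscr{SG}$.

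The only substantive step is the verification that the fiber product is a desingularization of $X$, i.e., that $\widetilde{X}$ is smooth and $q$ is a modification; both rely on the fact that a finite unramified covering of reduced complex spaces is an analytic local isomorphism, after which everything reduces to Proposition \ref{loc}.
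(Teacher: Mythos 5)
Your argument is essentially the paper's own proof: the same Cartesian square, the same reduction to Proposition \ref{loc} applied to $\tilde f:\widetilde{X}\to\widetilde{Y}$, and the same independence-of-desingularization remark to conclude. The only difference is that the paper justifies the step ``$\tilde f$ is a finite unramified covering'' by citing Ho's Lemma 2 (a proper local homeomorphism onto a locally compact space is a finite covering map), whereas you pass from ``local isomorphism'' to ``finite covering'' a little quickly; with that reference supplied, your proof matches the paper's.
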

\begin{proof}
 Suppose $p:\widetilde{Y}\rightarrow Y$ is a desingulariztion. Consider the Cartesian diagram (\ref{Car}). We know that $\tilde{f}$ is a surjective local isomorphism and $q$ is a modification. Since $\widetilde{Y}$ is locally compact, by \cite{Ho}, Lemma 2, $\tilde f$ is a finite covering map in topological sense. Moreover, since $\tilde{f}$ is a local isomorphism (in analytic sense),  $\tilde f$ is a finite unramifield covering map (in analytic sense). By Proposition \ref{loc}, we know $\widetilde{X}$ is a strongly Gauduchon manifold, if and only if $\widetilde{Y}$ is a strongly Gauduchon manifold. Hence $X\in\mathscr{SG}$ if and only if $Y\in\mathscr{SG}$.
\end{proof}

We generalize Theorem 3.5 (2) and Theorem 3.9 (2) in \cite{A} as follows.
\begin{proposition}
Let $f:X\rightarrow Y$ be a smooth morphism of  reduced compact complex spaces, and $n=\dim X>m=\dim Y\geq2$. If $Y\in \mathscr{B}$ and there exists a point $y_0$ in $Y$ such that the current $[f^{-1}(y_0)]$ is not d-exact on $X$, then $X \in\mathscr{SG}$.
\end{proposition}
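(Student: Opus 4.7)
The plan is to reduce the statement to a claim about submersions of compact complex manifolds via a fibre-product construction, transfer the fibre-class hypothesis to the manifold setting, and then invoke the dual characterization of strongly Gauduchon manifolds to derive the conclusion. Since $Y\in\mathscr{B}$, fix a desingularization $p:\widetilde{Y}\to Y$ with $\widetilde{Y}$ a compact balanced manifold, and form the fibre product $\widetilde{X}:=X\times_Y\widetilde{Y}$ with the two projections $q:\widetilde{X}\to X$ and $\tilde{f}:\widetilde{X}\to \widetilde{Y}$. As in the proof of Proposition \ref{sm2} (following \cite{FMX}, Proposition 2.4), $q$ is a modification and $\tilde{f}$ is a holomorphic submersion between complex manifolds; hence $\widetilde{X}$ is smooth and $q$ is a desingularization of $X$. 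Thus it suffices to show that $\widetilde{X}$ is a strongly Gauduchon manifold.

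To transfer the fibre-class hypothesis, observe that $f$ is smooth and proper, so by Ehresmann's theorem it is a locally trivial $C^\infty$-fibre bundle; all fibres of $f$ are therefore mutually diffeomorphic and homologous as currents on $X$, and the hypothesis implies that $[f^{-1}(y)]$ is non-$d$-exact on $X$ for every $y\in Y$. Choose $y_0\in Y_{reg}$ outside the (proper analytic) images of the exceptional loci of $p$ and $q$; then $\tilde{y}_0:=p^{-1}(y_0)$ is a single point and $q$ restricts to a biholomorphism $\tilde{f}^{-1}(\tilde{y}_0)\xrightarrow{\sim} f^{-1}(y_0)$. Hence $q_*[\tilde{f}^{-1}(\tilde{y}_0)]=[f^{-1}(y_0)]$, and since the latter is non-$d$-exact on $X$, the current $[\tilde{f}^{-1}(\tilde{y}_0)]$ is non-$d$-exact on $\widetilde{X}$.

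For the main step I apply Theorem \ref{sGc}: it suffices to exclude a nonzero positive $d$-exact $(1,1)$-current $T=dS$ on $\widetilde{X}$. Fix a balanced metric $\omega_{\widetilde{Y}}$ on $\widetilde{Y}$, so that $\tilde{f}^*\omega_{\widetilde{Y}}^{m-1}$ is $d$-closed of bidegree $(m-1,m-1)$ on $\widetilde{X}$, and fix an auxiliary hermitian metric $\omega_{\widetilde{X}}$. The non-$d$-exactness of $[\tilde{f}^{-1}(\tilde{y}_0)]$ yields, via the de Rham isomorphism, a closed real $2r$-form $\tau$ on $\widetilde{X}$ with $\int_{\tilde{f}^{-1}(\tilde{y}_0)}\tau>0$. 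The plan is to assemble $\tilde{f}^*\omega_{\widetilde{Y}}^{m-1}$, $\tau$, and a sufficiently large multiple of $\omega_{\widetilde{X}}^{n-1}$ into a strictly positive $(n-1,n-1)$-form $\Omega$ with $\partial\Omega$ lying in $\operatorname{Im}\overline{\partial}$, i.e.\ the $(n-1,n-1)$-component of a closed real $(2n-2)$-form as in Definition \ref{def sG}. Once such $\Omega$ exists, pairing with $T$ forces $\int_{\widetilde{X}}T\wedge\Omega=0$ by Stokes (the required $(n,n-2)+(n-2,n)$-correction vanishes against $T$ for bidegree reasons), contradicting the strict positivity of $T\wedge\Omega$ coming from the positivity of both $T$ and $\Omega$.

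The main obstacle is making $\Omega$ \emph{strictly} positive while preserving $\partial\Omega\in\operatorname{Im}\overline{\partial}$: the product $\tilde{f}^*\omega_{\widetilde{Y}}^{m-1}\wedge\omega_{\widetilde{X}}^r$ degenerates against purely vertical $(1,1)$-forms, and the $(r,r)$-component of $\tau$ need not be positive. Overcoming this requires a careful mixing of the horizontal form $\tilde{f}^*\omega_{\widetilde{Y}}^{m-1}$ with a fibre-positive factor, corrected by $\partial\overline{\partial}$-exact terms built from $\tau$, adapting to the strongly Gauduchon setting the constructions behind Theorems 3.5(2) and 3.9(2) of \cite{A} in the K\"ahler and balanced cases; the flexibility from asking only that $\partial\Omega\in\operatorname{Im}\overline{\partial}$ (rather than $d\Omega=0$) is precisely what permits the strongly Gauduchon conclusion under the balanced hypothesis on $Y$.
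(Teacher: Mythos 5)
Your reduction to the manifold setting is the same as the paper's: form the fibre product $\widetilde{X}=X\times_Y\widetilde{Y}$ over a balanced desingularization of $Y$, note that $q$ is a modification and $\tilde{f}$ a submersion, and push the non-$d$-exactness of the fibre class up to $\widetilde{X}$ via $q_*$. (Two small remarks on that part: the detour through Ehresmann's theorem and a generic choice of $y_0$ is unnecessary and somewhat delicate since $Y$ may be singular --- the paper simply observes that $\tilde{f}^{-1}(\tilde{y})=f^{-1}(y_0)\times\{\tilde{y}\}$ for \emph{any} $\tilde{y}\in p^{-1}(y_0)$, so $q_*[\tilde{f}^{-1}(\tilde{y})]=[f^{-1}(y_0)]$ directly; and you then need non-$d$-exactness for \emph{every} fibre of $\tilde{f}$, which follows because all point classes agree in $H^{2m}(\widetilde{Y},\mathbb{R})$ and $[\tilde{f}^{-1}(\tilde{y}')]=\tilde{f}^*[\tilde{y}']$ for a submersion.)

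The genuine gap is in your final step. The paper at this point invokes Theorems 3.5(2) and 3.9(2) of \cite{A} as a black box: a holomorphic submersion of compact complex manifolds onto a balanced manifold of dimension $\geq 2$ with non-$d$-exact fibre currents has strongly Gauduchon total space. You instead set out to reprove this via Theorem \ref{sGc}, by constructing a strictly positive $(n-1,n-1)$-form $\Omega$ on $\widetilde{X}$ with $\partial\Omega\in\operatorname{Im}\overline{\partial}$ out of $\tilde{f}^*\omega_{\widetilde{Y}}^{m-1}$, a de Rham representative $\tau$ dual to the fibre class, and a large multiple of $\omega_{\widetilde{X}}^{n-1}$ --- but you then explicitly identify the obstruction (making $\Omega$ strictly positive in the vertical directions while keeping $\partial\Omega$ $\overline{\partial}$-exact; note also that adding $C\omega_{\widetilde{X}}^{n-1}$ destroys the exactness of $\partial\Omega$, so that term cannot appear at all) and defer its resolution to ``a careful mixing \ldots adapting the constructions of \cite{A}.'' That construction \emph{is} the content of Alessandrini's theorems; as written, the decisive step of the argument is asserted rather than proved. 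Either cite \cite{A}, Theorems 3.5(2) and 3.9(2) directly, as the paper does, or carry out the construction in full --- the sketch given does not yet constitute a proof.
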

\begin{proof}
Choose a desingulariztion $p:\widetilde{Y}\rightarrow Y$ such that $\widetilde{Y}$ is a compact balanced manifold. Considering the Catesian diagram (\ref{Car}), we know that $\tilde{f}$ is a submersion of complex manifolds and $q$ is a modification.

For every $\tilde{y}\in p^{-1}(y_0)$, the current $[\tilde{f}^{-1}(\tilde{y})]$ can not be written as $dQ$ for any current $Q$ of degree $2m-1$ on $\widetilde{X}$. If not, since $\tilde{f}^{-1}(\tilde{y})= f^{-1}(y_0)\times \{\tilde{y}\}$,  we have
\begin{displaymath}
[f^{-1}(y_0)]= q_*[\tilde{f}^{-1}(\tilde{y})]=q_*(dQ)=dq_*Q,
\end{displaymath}
which contradicts the assumption.

Now suppose $\tilde{y}'$ is any point in $\widetilde{Y}$. Then the fundamental classes $[\tilde{y}]=[\tilde{y}']$ in $H^{2m}(\widetilde{Y} , \mathbb{R})$. Since $\tilde{f}$ is smooth,
\begin{displaymath}
[\tilde{f}^{-1}(\tilde{y}')]= \tilde{f}^*[\tilde{y}']= \tilde{f}^*[\tilde{y}]=[\tilde{f}^{-1}(\tilde{y})]
\end{displaymath}
in $H^{2m}(\widetilde{X}, \mathbb{R})$, where $\tilde{y}\in p^{-1}(y_0)$ and $\tilde{f}^*: H^{2m}(\widetilde{Y}, \mathbb{R})\rightarrow H^{2m}(\widetilde{X}, \mathbb{R})$ is the pull back of $\tilde{f}$. Hence for every $\tilde{y}'\in \widetilde{Y}$, the current $[\tilde{f}^{-1}(\tilde{y}')]$ is not $d$-exact on $\widetilde{X}$. By \cite{A}, Theorem 3.5 (2) and Theorem 3.9 (2), $\widetilde{X}$ is a strongly Gauduchon manifold, hence $X \in\mathscr{SG}$.
\end{proof}

Next we consider the relation between strongly Gauduchon spaces and class $\mathscr{SG}$. From definitions of them, the relation between strongly Gauduchon spaces and class $\mathscr{SG}$ is similar to that of K\"ahler spaces and  Fujiki class $\mathscr{C}$. Moreover, in the nonsingular case, we know that a modification of a strongly Gauduchon manifold is also a strongly Gauduchon manifold, by Theorem \ref{msG}. So we think the following also hold.
\begin{conjecture}\label{conj}
Any strongly Gauduchon space belongs to class $\mathscr{SG}$.
\end{conjecture}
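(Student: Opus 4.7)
The plan is to start from a strictly positive $(n-1,n-1)$-form $\Omega$ on $X$ with $\partial\Omega=\overline{\partial}\alpha$, fix a desingularization $\pi:\widetilde{X}\to X$, and construct on $\widetilde{X}$ a strictly positive $(n-1,n-1)$-form $\widetilde{\Omega}$ with $\partial\widetilde{\Omega}$ being $\overline{\partial}$-exact. The naive candidate $\pi^{*}\Omega$ has the right cohomological property, since $\partial\pi^{*}\Omega=\overline{\partial}(\pi^{*}\alpha)$, but it is only semi-positive and degenerates along the exceptional divisor $E$. The whole task is to repair positivity along $E$ without destroying $\overline{\partial}$-exactness of $\partial(\cdot)$.

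Rather than producing $\widetilde{\Omega}$ directly, I would try to argue via the duality of Theorem \ref{sGc}. Assume for contradiction that $\widetilde{X}$ is not a strongly Gauduchon manifold, so there is a nonzero positive $(1,1)$-current $T=dS$ on $\widetilde{X}$. Pushing forward, $\pi_{*}T$ is a positive $(1,1)$-current on $X$ and formally $\pi_{*}T=d\pi_{*}S$. The natural pairing
\[
\langle \Omega,\pi_{*}T\rangle = \langle \pi^{*}\Omega, T\rangle
\]
should vanish, because $\partial(\pi^{*}\Omega)$ is $\overline{\partial}$-exact and $T$ is $d$-exact, while on the other hand $\Omega$ is strictly positive so this pairing should be strictly positive whenever $\pi_{*}T\neq 0$—giving a contradiction. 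Making this rigorous requires extending Popovici's Hahn--Banach argument from Theorem \ref{sGc} to the setting of reduced compact complex spaces, using currents defined through local embeddings as in the excerpt; this step is technical but should go through.

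The genuinely hard case is when $\pi_{*}T=0$, i.e.\ $T$ is supported on the exceptional set $E$. By Siu's support theorem and bidegree, $T$ is a nonnegative combination $T=\sum c_{j}[E_{j}]$ of integration currents on the codimension-one irreducible components of $E$, and one must rule out that any such combination is $d$-exact on $\widetilde{X}$. A natural attempt is to induct on $\dim\mathrm{Sing}(X)$: refine the resolution so that $E$ is a simple normal crossing divisor, pair $T$ with $\pi^{*}\Omega$ plus a small bump form $\varepsilon\,\omega_{0}^{n-1}$ for an auxiliary Hermitian metric $\omega_{0}$ on $\widetilde{X}$, and use the fact that the singularities of $X$ themselves inherit an inductive strongly-Gauduchon-type structure from the ambient $\Omega$.

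I expect this last step to be the principal obstacle and the reason the authors leave the statement as Conjecture \ref{conj}. Unlike the K\"ahler/Fujiki class $\mathscr{C}$ case handled by Varouchas, one cannot restore strict positivity of an $(n-1,n-1)$-form by adding $i\partial\overline{\partial}u$ for a strictly plurisubharmonic $u$, since the positivity cone in bidegree $(n-1,n-1)$ is not generated by such expressions; this is precisely why a local patching argument near $E$, which works in the K\"ahler setting, does not transpose directly here. Consequently one likely needs either a new local model for strongly Gauduchon singularities, or a more refined analysis of positive $d$-exact currents carried by the exceptional divisor of a resolution.
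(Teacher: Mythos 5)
This statement is stated in the paper as Conjecture \ref{conj} and is \emph{not} proved there in general; the authors only establish it under additional hypotheses (Theorems \ref{relation}, \ref{betti}, \ref{exc-relation}). Your proposal follows exactly the strategy of those partial results: reduce via Theorem \ref{sGc} to showing that no nonzero positive $d$-exact $(1,1)$-current $T$ exists on a desingularization $\widetilde{X}$, pair $T$ with $\pi^{*}\Omega$ to conclude $\mathrm{supp}\,T\subseteq E$ (note that the paper does this directly, without your detour through $\pi_{*}T$: since $T(\pi^{*}\Omega)=0$ and $\pi^{*}\Omega^{n-1,n-1}$ is strictly positive off $E$, the support statement is immediate, so there is no separate ``hard case $\pi_{*}T=0$''), and then invoke the Alessandrini--Bassanelli support theorem (Theorem \ref{support}) to write $T=\sum_{i}c_{i}[E_{i}]$ with $c_{i}\geq 0$ over the $(n-1)$-dimensional components of $E$.

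The genuine gap is the one you yourself flag: ruling out that a nonzero nonnegative combination $\sum_{i}c_{i}[E_{i}]$ can be $d$-exact on $\widetilde{X}$. Your sketch for this step (induction on $\dim\mathrm{Sing}(X)$, normal crossing resolution, bump forms $\varepsilon\,\omega_{0}^{n-1}$) is not carried out and does not obviously close, which is precisely why the statement remains a conjecture. The paper closes this step only by importing an extra hypothesis that kills such classes: injectivity of $i\colon H^{1,1}_{BC}(\widetilde{X})\to H^{1,1}_{A}(\widetilde{X})$ (so $T=i\partial\overline{\partial}Q$ with $Q$ plurisubharmonic, hence constant, in Theorem \ref{relation}), or normality plus $b_{2n-1}(X)=0$ (so the $[E_{i}]$ are linearly independent in $H_{2n-2}(\widetilde{X},\mathbb{R})$ by Lemma \ref{exact}, in Theorem \ref{betti}), or $\mathrm{codim}\,E\geq 2$ (so there are no $(n-1)$-dimensional components at all, in Theorem \ref{exc-relation}). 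So your write-up is a correct reconstruction of the known reduction together with an honest identification of the open step, but it is not a proof of the conjecture; if you restate it as a proof of any of the three conditional theorems, the argument is essentially the paper's.
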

We can prove it in some extra conditions. First, we recall a theorem and several notations.

\begin{theorem}[\cite{AB2}, Theorem 1.5]\label{support}
 Let $M$ be a complex manifold of dimension $n$, $E$ a compact analytic subset and $\{E_i\}_{i=1,...,s}$ all the $p$-dimensional irreducible components of $E$. If $T$ is a $\partial\overline{\partial}$-closed positive $(n-p, n-p)$-current on $M$ such that $supp T \subseteq E$, then there exist constants $c_i\geq 0$ such that $T- \Sigma_{i=1}^s c_i [E_i]$ is supported on the union of the irreducible components of $E$ of dimension greater than $p$.
\end{theorem}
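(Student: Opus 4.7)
The plan is to combine a local structure theorem for positive $\partial\overline{\partial}$-closed currents whose support lies in a smooth $p$-dimensional analytic set with a globalization step based on the irreducibility of each $E_i$, and then to absorb the remaining mass by extending across the singular locus of the $p$-dimensional part.

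First, I would localize: pick a smooth point $x_0$ of some $E_i$ lying in no other irreducible component of $E$, and choose a coordinate polydisc $U$ around $x_0$ in which $E_i\cap U$ is the coordinate subspace $V=\{z_{p+1}=\cdots=z_n=0\}$. The restriction $T|_U$ is then a positive $\partial\overline{\partial}$-closed $(n-p,n-p)$-current supported in the smooth $p$-dimensional submanifold $V$. A support theorem of Federer type, combined with the positivity of $T$, forces $T|_U$ to be of the form $\mu\wedge [V]$ for a positive Radon measure $\mu$ on $V$; the condition that $T$ is $\partial\overline{\partial}$-closed on $U$ then translates into a weak pluriharmonicity condition on the density of $\mu$ against the Euclidean $(p,p)$-volume form on $V$.

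Second, since $\mu$ is locally finite and pluriharmonic, and since the regular locus of $E_i$ with the other components removed is connected by irreducibility of $E_i$, the maximum principle propagates its density to a single non-negative constant $c_i$. Hence $T$ coincides with $c_i[E_i]$ in a neighbourhood of every generic smooth point of $E_i$. Setting $T':=T-\sum_{i=1}^s c_i[E_i]$, the current $T'$ remains positive and $\partial\overline{\partial}$-closed, and by construction its transverse density along each $E_i$ vanishes identically.

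Third, I would invoke an extension-across-analytic-set argument (in the spirit of Skoda--El Mir) to conclude that $T'$ cannot be carried by the $p$-dimensional components: vanishing transverse density together with locally finite mass forces $\mathrm{supp}(T')\subseteq \bigcup_{\dim E_j>p} E_j$, which is the claim. The principal obstacle is the local structure step: in the $d$-closed setting it reduces to Siu's theorem on Lelong numbers, but for $\partial\overline{\partial}$-closed currents Lelong numbers are not directly available, so one must substitute elliptic regularity for the induced pluriharmonicity condition together with positivity estimates to replace the missing harmonic-measure control.
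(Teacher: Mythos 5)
The paper itself offers no proof of this statement: it is quoted as Theorem 1.5 of \cite{AB2} and used as a black box, so there is nothing in-paper to compare against. Your sketch does follow the broad strategy of the original Alessandrini--Bassanelli argument: local structure of a positive $\partial\overline{\partial}$-closed current of bidimension $(p,p)$ along the regular part of each $E_i$, constancy of the resulting pluriharmonic density, and elimination of the residue carried by a lower-dimensional analytic set.

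There is, however, a genuine gap at the constancy step. The set of smooth points of $E_i$ lying in no other component is a connected but non-compact complex manifold, and a non-negative pluriharmonic function on such a manifold need not be constant (e.g. $\mathrm{Re}(z_1)+2$ on a coordinate subspace of a polydisc), so the maximum principle gives nothing there. One must first extend the density across the analytic subset $\mathrm{Sing}(E_i)\cup\bigcup_{j\neq i}(E_i\cap E_j)\cup(E_i\cap E')$ (where $E'$ collects the components of other dimensions) to a pluriharmonic function on all of $E_i$ or its normalization --- an extension that itself needs the local mass bounds coming from positivity --- and only then does compactness of $E_i$ force constancy. Your argument never uses compactness of $E$, yet the theorem is false without it: for $M$ a polydisc, $E=V$ a $p$-dimensional coordinate subspace and $T=(\mathrm{Re}(z_1)+2)[V]$, the current $T$ is positive and $\partial\overline{\partial}$-closed, but $T-c[V]$ is never supported on the (empty) union of higher-dimensional components. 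A second, smaller issue is the final step: $T'=T-\sum_i c_i[E_i]$ is no longer positive, and an order-zero current of bidimension $(p,p)$ can perfectly well be supported on a set of dimension $<p$ (take $\delta_0$ times a positive $(n-p,n-p)$-form); killing the residue genuinely requires the $\partial\overline{\partial}$-closedness, via the Skoda--El Mir type extension theorem for pluriharmonic currents that you name, applied so as to recover a positive pluriharmonic current to which the low-dimensional-support vanishing theorem applies.
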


For a compact complex manifold $M$, the \emph{Bott-Chern cohomology group} of degree $(p, q)$ is defined as
\begin{displaymath}
H^{p,q}_{BC}(M):= \frac{Ker(d: A^{p, q}(M)\rightarrow A^{p+q+1}(M))}{\partial\overline{\partial}A^{p-1, q-1}(M)}.
\end{displaymath}
and the \emph{Aeplli cohomology group} of degree $(p, q)$ is defined as
\begin{displaymath}
H^{p,q}_{A}(M):= \frac{Ker(\partial\overline{\partial}: A^{p, q}(M)\rightarrow A^{p+1, q+1}(M))}{\partial A^{p-1, q}(M) + \overline{\partial}A^{p, q-1}(M)}.
\end{displaymath}
It is well known that all these groups can also be defined  by means of currents of corresponding degree. For every $(p, q)\in \mathbb{N}^2$, the identity induces a natural map
\begin{displaymath}
i: H^{p,q}_{BC}(M)\rightarrow H^{p,q}_{A}(M).
\end{displaymath}
In general, the map $i$ is neither injective nor surjective. If $M$ satisfies $\partial\overline{\partial}$-lemma, then for every $(p, q)\in \mathbb{N}^2$, $i$ is an isomorphism, referring to \cite{DGMS}, Lemma 5.15, Remarks 5.16, 5.21.

\begin{theorem}\label{relation}
Let $X$ be a strongly Gauduchon space. If it has a desingularization $\widetilde{X}$ such that $i: H^{1, 1}_{BC}(\widetilde{X})\rightarrow H^{1,1}_{A}(\widetilde{X})$ is injective, then $X\in \mathscr{SG}$.
\end{theorem}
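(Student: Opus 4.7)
The plan is to invoke Popovici's dual characterization (Theorem \ref{sGc}): the desingularization $\widetilde{X}$ will be a strongly Gauduchon manifold as soon as every nonzero positive $(1,1)$-current on $\widetilde{X}$ that is $d$-exact is ruled out. Assuming such a $T=dU$ exists on $\widetilde{X}$, the goal is to produce a contradiction by exploiting the injectivity hypothesis on $i$.

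The first step is to promote $d$-exactness of $T$ to $\partial\overline{\partial}$-exactness. Since $T$ is a real current of bidegree $(1,1)$, I may take $U$ real and split $U=U^{1,0}+\overline{U^{1,0}}$; comparing bidegrees in $dU$ gives $T=\partial\overline{U^{1,0}}+\overline{\partial}U^{1,0}$, so $[T]=0$ in $H^{1,1}_A(\widetilde{X})$. The injectivity of $i$ then forces $[T]=0$ in $H^{1,1}_{BC}(\widetilde{X})$ as well, providing a $(0,0)$-current $\phi$ on $\widetilde{X}$ with $T=\partial\overline{\partial}\phi$.

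Next, I would fix a Gauduchon metric $\omega$ on the compact complex manifold $\widetilde{X}$, whose existence is guaranteed by Gauduchon's classical theorem, so that $\partial\overline{\partial}\omega^{n-1}=0$. Integration by parts against this strictly positive $(n-1,n-1)$-form yields $T(\omega^{n-1})=\phi(\partial\overline{\partial}\omega^{n-1})=0$; on the other hand, the trace measure $T\wedge\omega^{n-1}$ of a nonzero positive $(1,1)$-current against a strictly positive $(n-1,n-1)$-form has strictly positive total mass, so $T(\omega^{n-1})>0$---the desired contradiction. A slightly longer variant also brings in the strongly Gauduchon structure of $X$ itself: setting $\Omega':=\Omega-\alpha-\overline{\alpha}$ gives a real $d$-closed $(2n-2)$-form on $X$ whose $(n-1,n-1)$-part is $\Omega$, and $T(\pi^*\Omega')=0$ forces $\mathrm{supp}(T)\subseteq E$ (the exceptional set of $\pi$); Theorem \ref{support} with $p=n-1$ then writes $T=\sum c_i[E_i]$ as a nonnegative combination of exceptional divisors, after which the same Gauduchon-pairing step produces the contradiction.

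The main obstacle is precisely the passage from Aeppli-triviality to Bott-Chern-triviality of $T$, where the injectivity hypothesis on $i$ is used in an essential way: without it one cannot produce the distributional potential $\phi$, and the key identity $T(\omega^{n-1})=0$ would fail. Once $\phi$ is in hand, the remaining positivity argument via a Gauduchon metric is routine, and it concludes that $\widetilde{X}$ is strongly Gauduchon, hence $X\in\mathscr{SG}$.
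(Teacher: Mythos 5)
Your proposal is correct, and its core mechanism is the same as the paper's: reduce via Theorem \ref{sGc} to killing a $d$-exact positive $(1,1)$-current $T$, observe that $d$-exactness in bidegree $(1,1)$ gives $[T]_A=0$, and use the injectivity of $i$ to upgrade this to $[T]_{BC}=0$, hence to a global $\partial\overline{\partial}$-potential. Where you diverge is the endgame: the paper writes $T=i\partial\overline{\partial}Q$ with $Q$ a real $0$-current, notes that $Q$ is plurisubharmonic and invokes the maximum principle on the compact manifold to conclude $Q$ is constant and $T=0$; you instead pair $T=\partial\overline{\partial}\phi$ against $\omega^{n-1}$ for a Gauduchon metric $\omega$, getting $T(\omega^{n-1})=0$ against the strict positivity of the trace mass. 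Both conclusions are standard and correct. The paper also interposes the support argument ($\mathrm{supp}\,T\subseteq E$ via $T(\pi^*\Omega)=0$, then $T=\sum_i c_i[E_i]$ by Theorem \ref{support}), but its final paragraph never uses that decomposition; you correctly identify this step as dispensable and relegate it to an optional variant. A byproduct visible in both arguments (and made explicit by yours): the decisive step uses only the injectivity of $i$ on $H^{1,1}$, not the strongly Gauduchon structure of $X$, so that hypothesis is in fact redundant for the conclusion that $\widetilde{X}$ is a strongly Gauduchon manifold.
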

\begin{proof}
  Set $\dim X= n$. Suppose  $\pi: \widetilde{X}\rightarrow X $ is the desingularization. We need to prove that $\widetilde{X}$ is a strongly Gauduchon manifold. By Theorem \ref{sGc}, it suffices to prove that if $T$ is a positive $(1, 1)$-current on $\widetilde{X}$ which is $d$-exact, then $T= 0$.

Let $E\subseteq\widetilde{X}$ be the exceptional set of $\pi$, $\Omega$ the real closed $(2n-2)$-form on $X$ whose $(n-1, n-1)$-part $\Omega^{n-1, n-1}$ is strictly positive. Since $T$ is $d$-exact, we have $T(\pi^*\Omega)=0$. On the other hand, since $T$ is a $(1, 1)$-current,  we have
\begin{displaymath}
T(\pi^*\Omega)= T(\pi^*\Omega^{n-1, n-1})= \int_{\widetilde{X}} T\wedge \pi^*\Omega^{n-1, n-1}
\end{displaymath}
and $\pi^*\Omega^{n-1, n-1}$ is strictly positive on $\widetilde{X}-E$, so we obtain $suppT\subseteq E$.

By Theorem \ref{support} for $p= n-1$, we obtain
\begin{displaymath}
 T= \sum_i c_i[E_i],
\end{displaymath}
where $c_i\geq0$ and  $E_i$ are the $(n-1)$-dimensional irreducible components of $E$. Since $T$ is real and d-exact, $i([T]_{BC})=0$ in $H^{1,1}_{A}(\widetilde{X})$. Beacause $i$ is injective, we know $[T]_{BC}=0$ in $H^{1, 1}_{BC}(\widetilde{X})$. So, there is a real $0$-current $Q$ on $\widetilde{X}$, such that $T= i\partial\overline{\partial}Q$. Since $T\geq 0$, $Q$ is plurisubhamonic. By maximum principle, $Q$ is a constant,  hence $T= 0$.
\end{proof}

\begin{lemma}[\cite{FMX}, Lemma 3.6]\label{exact}
Let $f: X\rightarrow Y$ be a modification between reduced compact complex spaces of dimension $n$. If $Y$ is normal and the betti number $b_{2n-1}(Y)= 0$, then there is a exact sequence
\begin{displaymath}
\xymatrix{
0\ar[r] &H_{2n-2}(E, \mathbb{R})\ar[r]^{i_*} &H_{2n-2}(X, \mathbb{R})\ar[r]^{f_*} &H_{2n-2}(Y, \mathbb{R})
}
\end{displaymath}
where $E$ is the exceptional set of $f$, $i: E\rightarrow X$ is the inclusion. Moreover, $H_{2n-2}(E, \mathbb{R})= \oplus_j \mathbb{R}[E_j]$, where $\{E_j\}_j$ are all the $(n-1)$-dimensional irreducible components of $E$ $($possiblly there exist some other components of dimension $< n-1$ in $E$$)$.
\end{lemma}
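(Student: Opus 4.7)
The plan is to compare the long exact sequences of the pairs $(X,E)$ and $(Y,S)$, where $S:=f(E)$, and to use excision together with the normality hypothesis to transfer vanishing information from $Y$ back to $X$. The key geometric input is that normality of $Y$, combined with $f$ being a (bimeromorphic) modification, forces $S$ to have complex codimension at least two in $Y$ — the analytic incarnation of Zariski's main theorem. Consequently $\dim_{\mathbb{R}} S\le 2n-4$, and so $H_k(S;\mathbb{R})=0$ for every $k\ge 2n-3$; in particular both $H_{2n-1}(S;\mathbb{R})$ and $H_{2n-2}(S;\mathbb{R})$ vanish, which is what the diagram chase needs.

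Because $X$ is compact, $f$ is proper; together with the facts that $f$ restricts to a biholomorphism $X\setminus E\to Y\setminus S$ and $E=f^{-1}(S)$ (both built into the definition of the exceptional set of a modification), the induced map $\overline f\colon X/E\to Y/S$ is a continuous bijection between compact Hausdorff spaces, hence a homeomorphism. Closed analytic subsets of complex spaces admit regular neighborhoods that deformation retract onto them, so both pairs are good pairs, and I would obtain the excision-type isomorphism $f_*\colon H_k(X,E;\mathbb{R})\xrightarrow{\cong} H_k(Y,S;\mathbb{R})$ for every $k\ge 1$. Feeding the vanishings $H_{2n-1}(S)=H_{2n-2}(S)=0$ and $b_{2n-1}(Y)=0$ into the long exact sequence of $(Y,S)$ yields $H_{2n-1}(Y,S;\mathbb{R})=0$, whence $H_{2n-1}(X,E;\mathbb{R})=0$ via excision; the long exact sequence of $(X,E)$ then forces $i_*$ to be injective.

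For exactness at $H_{2n-2}(X)$, the composition $f_*\circ i_*$ factors through $H_{2n-2}(S;\mathbb{R})=0$, giving $\mathrm{im}\,i_*\subseteq\ker f_*$; conversely, if $f_*\alpha=0$, then naturality of the pair sequences together with the excision isomorphism shows that the image of $\alpha$ in $H_{2n-2}(X,E)$ corresponds to the image of $f_*\alpha=0$ in $H_{2n-2}(Y,S)$, so $\alpha\in\ker(H_{2n-2}(X)\to H_{2n-2}(X,E))=\mathrm{im}\,i_*$. Finally, the identification $H_{2n-2}(E;\mathbb{R})=\bigoplus_j\mathbb{R}[E_j]$ is standard for top real-dimensional homology of a compact analytic set: components of complex dimension strictly less than $n-1$ contribute nothing in degree $2n-2$, and Mayer--Vietoris applied to the $(n-1)$-dimensional components, whose pairwise intersections have real dimension at most $2n-4$, reduces the computation to the fundamental classes of the individual $E_j$. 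The main obstacle is the codimension bound $\mathrm{codim}_Y S\ge 2$: without normality, $S$ could carry nontrivial $(2n-2)$-cycles, breaking both the injectivity of $i_*$ and the exactness at $H_{2n-2}(X)$; once this analytic input is in hand, the remainder is a purely formal diagram chase.
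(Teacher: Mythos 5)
The paper does not actually prove this lemma --- it is quoted verbatim from \cite{FMX}, Lemma 3.6 --- so there is no in-paper argument to compare against; your proof is the standard one and essentially the argument of the cited source: normality of $Y$ forces $\mathrm{codim}_Y f(E)\geq 2$, excision (via the homeomorphism $X/E\to Y/f(E)$) identifies $H_k(X,E;\mathbb{R})$ with $H_k(Y,f(E);\mathbb{R})$, and the two long exact sequences of pairs together with $b_{2n-1}(Y)=0$, the dimension bound $H_k(f(E);\mathbb{R})=0$ for $k\geq 2n-3$, and the Borel--Haefliger description of the top-degree homology of $E$ give the conclusion. Your argument is correct.
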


\begin{theorem}\label{betti}
 If $X$ is a normal strongly Gauduchon space of dimension $n$ with the betti number $b_{2n-1}(X)=0$, then $X\in \mathscr{SG}$.
\end{theorem}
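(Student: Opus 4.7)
The plan is to mirror the structure of the proof of Theorem \ref{relation}: pick any desingularization $\pi:\widetilde{X}\to X$ and show that $\widetilde{X}$ is a strongly Gauduchon manifold, which by definition gives $X\in\mathscr{SG}$. By Theorem \ref{sGc}, it suffices to show that the only $d$-exact positive $(1,1)$-current $T$ on $\widetilde{X}$ is zero. So I would start by fixing such a $T$ and letting $\Omega$ be a real $d$-closed $(2n-2)$-form on $X$ whose $(n-1,n-1)$-component $\Omega^{n-1,n-1}$ is strictly positive (this exists by the equivalent form of Definition \ref{def sG}). Pairing $T$ against $\pi^*\Omega$ and using that $T$ is $d$-exact while $\pi^*\Omega$ is $d$-closed gives $T(\pi^*\Omega)=0$; since $T$ is of type $(1,1)$ only the $(n-1,n-1)$-component contributes, and $\pi^*\Omega^{n-1,n-1}$ is a strictly positive $(n-1,n-1)$-form on $\widetilde{X}\setminus E$ (where $E$ is the exceptional set of $\pi$), so $T$ must be supported on $E$.

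Once $\operatorname{supp}T\subseteq E$, I would invoke Theorem \ref{support} with $p=n-1$ to write
\begin{displaymath}
T=\sum_{i} c_i[E_i],\qquad c_i\geq 0,
\end{displaymath}
where $\{E_i\}$ are the $(n-1)$-dimensional irreducible components of $E$. The key new input, replacing the Bott--Chern injectivity hypothesis of Theorem \ref{relation}, is Lemma \ref{exact}: since $X$ is normal and $b_{2n-1}(X)=0$, we have the exact sequence
\begin{displaymath}
0\longrightarrow H_{2n-2}(E,\mathbb{R})\xrightarrow{\,i_*\,} H_{2n-2}(\widetilde{X},\mathbb{R})\xrightarrow{\,\pi_*\,} H_{2n-2}(X,\mathbb{R}),
\end{displaymath}
together with the identification $H_{2n-2}(E,\mathbb{R})=\bigoplus_{j}\mathbb{R}[E_j]$. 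Since $T$ is $d$-exact on the compact manifold $\widetilde{X}$, the homology class $\sum_i c_i[E_i]$ vanishes in $H_{2n-2}(\widetilde{X},\mathbb{R})$; by injectivity of $i_*$ it vanishes already in $H_{2n-2}(E,\mathbb{R})$; and by the fact that $\{[E_j]\}$ is a basis of this group, each $c_i=0$, whence $T=0$.

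The conceptually delicate step is the last one: I need the currents $[E_i]$ on $\widetilde{X}$ to correspond, under the de Rham/Poincar\'e pairing, to the classes $i_*[E_i]$ appearing in Lemma \ref{exact}, so that the vanishing of $T$ as a $d$-exact current really forces the vanishing of the homology combination. This is standard on smooth compact manifolds, but it is the one point where some care is needed in translating between currents and singular homology; apart from that, the argument is a clean adaptation of Theorem \ref{relation} with the topological hypothesis $b_{2n-1}(X)=0$ (plus normality) replacing the cohomological hypothesis on $i:H^{1,1}_{BC}\to H^{1,1}_A$.
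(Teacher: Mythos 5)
Your proposal is correct and follows essentially the same route as the paper: reduce via Theorem \ref{sGc} to killing $d$-exact positive $(1,1)$-currents, localize the support to the exceptional set using the pulled-back strongly Gauduchon form, apply Theorem \ref{support} to write $T=\sum_i c_i[E_i]$, and then use the injectivity of $i_*$ and the identification $H_{2n-2}(E,\mathbb{R})=\oplus_j\mathbb{R}[E_j]$ from Lemma \ref{exact} to force all $c_i=0$. The paper's own proof is exactly this (stated more tersely, by reference to the proof of Theorem \ref{relation}), so no further comparison is needed.
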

\begin{proof}
Suppose $T$ is a positive $(1, 1)$-current on $\widetilde{X}$ which is $d$-exact. As the proof in Theorem \ref{relation}, we obtain
\begin{displaymath}
 T= \sum_i c_i[E_i]
\end{displaymath}
where $c_i\geq0$, $E_i$ are the $(n-1)$-dimensional irreducible components of $E$. Since $T$ is $d$-exact, $\sum_i c_i [E_i]= [T]_{\widetilde{X}} = 0$ in $H_{2n-2}(\widetilde{X}, \mathbb{R})$. By Lemma \ref{exact}, we get $c_i= 0$ for all $i$.
\end{proof}

\begin{theorem}\label{exc-relation}
Let $X$ be a compact strongly Gauduchon space. If it has a desingularization  $\widetilde{X}$ whose exceptional set has codimension $\geq 2$,
then $X\in \mathscr{SG}$.
\end{theorem}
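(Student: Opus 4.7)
The plan is to reduce to showing the desingularization $\widetilde{X}$ itself is a strongly Gauduchon manifold, which by Definition \ref{classSG} puts $X$ in class $\mathscr{SG}$. By Theorem \ref{sGc}, this reduces further to showing that any positive $(1,1)$-current $T$ on $\widetilde{X}$ which is $d$-exact must vanish. The argument will follow the same template as the proofs of Theorem \ref{relation} and Theorem \ref{betti}, but the codimension hypothesis will short-circuit the delicate end-game.

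First I would imitate the opening of those earlier proofs. Let $\pi:\widetilde{X}\to X$ be the given desingularization with exceptional set $E$, and let $\Omega'$ be a real closed $(2n-2)$-form on $X$ whose $(n-1,n-1)$-component $\Omega=\Omega'^{n-1,n-1}$ is strictly positive, guaranteed by the strongly Gauduchon assumption on $X$. The pullback $\pi^*\Omega'$ is $d$-closed on $\widetilde{X}$, so $T(\pi^*\Omega')=0$ because $T$ is $d$-exact. Since $T$ has bidegree $(1,1)$, only the $(n-1,n-1)$-component pairs nontrivially with $T$, giving $\int_{\widetilde{X}} T\wedge \pi^*\Omega=0$. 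As $\pi^*\Omega$ is strictly positive on $\widetilde{X}\setminus E$, this forces $\operatorname{supp}(T)\subseteq E$.

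The key step is then to apply Theorem \ref{support} to the $\partial\overline{\partial}$-closed (in fact $d$-closed) positive current $T$, which has bidegree $(n-(n-1),n-(n-1))$, with $p=n-1$. By hypothesis, every irreducible component of $E$ has dimension at most $n-2$. Therefore the list of $(n-1)$-dimensional components $\{E_i\}$ is empty, so the decomposition in Theorem \ref{support} reads $T-0$, and the residual current is supported on the union of components of $E$ of dimension strictly greater than $n-1$, which is again empty. Hence $T$ is supported on the empty set, i.e., $T=0$.

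I expect no serious obstacle: the codimension hypothesis collapses both halves of the output of Theorem \ref{support} simultaneously, so we do not need the Bott--Chern/Aeppli injectivity used in Theorem \ref{relation} nor the Betti number vanishing used in Theorem \ref{betti}. The only point requiring care is that Theorem \ref{support} demands $\partial\overline{\partial}$-closedness; this is automatic here because $T$ is $d$-exact, hence $d$-closed. Once this is noted, the three paragraphs above complete the proof.
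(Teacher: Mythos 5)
Your proposal is correct and follows essentially the same route as the paper: reduce via Theorem \ref{sGc} to killing a $d$-exact positive $(1,1)$-current $T$, obtain $\operatorname{supp}T\subseteq E$ by pairing with the pullback of the strongly Gauduchon form exactly as in Theorem \ref{relation}, and then apply Theorem \ref{support} with $p=n-1$, where the codimension $\geq 2$ hypothesis empties both the sum $\sum_i c_i[E_i]$ and the residual support, forcing $T=0$. The paper's proof is just a terser version of the same argument.
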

\begin{proof}
Suppose $\dim X=n$ and $T$ is a positive $(1, 1)$-current on $\widetilde{X}$ which is $d$-exact. As the proof in Theorem \ref{relation}, we obtain $suppT\subseteq E$. By Theorem \ref{support} for $p=n-1$, we get $T= 0$ immediately.
\end{proof}

\section{Families  of complex spaces over a nonsingular curve}

\quad In this section, we study families of complex spaces over a curve. It should be useful in the study of deformations and moduli spaces of complex spaces. The following definition is a generalization of the corresponding notion defined in \cite{M}.

\begin{definition} Let $X$ be a reduced compact complex space of pure dimension $n$, and $f:X\rightarrow C$ a holomorphic map onto a nonsingular compact complex curve $C$. $f$ is called \textup{topologically essential}, if for every $p\in C$, no linear combination $\sum_jc_j[F_j]$ is zero in $H_{2n-2}(X, \mathbb{R})$, where the ${F_j}^,s$ are all the irreducible components of the fibre $f^{-1}(p)$, $c_j\geq 0$ and at least one of the ${c_j}^,s$ is positive.
\end{definition}

Note that, for any  reduced compact complex space $X$ of pure dimension $n$ and the holomorphic map $f:X\rightarrow C$ onto a nonsingular compact complex curve $C$, $f$ is an open map by the open mapping theorem (\cite{GrRe}, page 109). Hence for every $p\in C$, every irreducible component of  $f^{-1}(p)$ has dimension $n-1$ (\cite{Fis}, \S3.10, Theorem).

Now, we can generalize \cite{X}, Theorem 4.1 as follows.
\begin{theorem}\label{topess}
Suppose $X$ is a purely $n$-dimensional compact normal complex space which admits a topologically essential holomorphic map $f:X\rightarrow C$ onto a nonsingular compact complex curve $C$, and $X$ has a desingularization $\pi:\widetilde{X}\to X$, such that no nonzero nonnegative linear combination of hypersurfaces contained in the exceptional set of $\pi$ is zero in $H_{2n-2}(\widetilde{X}, \mathbb{R})$. If every nonsingular fiber of $f$ is a strongly Gauduchon manifold, then $X\in \mathscr{SG}$.
\end{theorem}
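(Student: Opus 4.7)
The plan is to reduce to the smooth compact complex manifold case treated in \cite{X}, Theorem 4.1, applied to the composition $\tilde{f}:=f\circ\pi\colon\widetilde{X}\to C$. By Definition \ref{classSG}, $X\in\mathscr{SG}$ is equivalent to $\widetilde{X}$ being a strongly Gauduchon manifold, so it suffices to verify the two hypotheses of \cite{X}, Theorem 4.1 for $\tilde{f}$: topological essentiality, and the strongly Gauduchon property for every nonsingular fiber.

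For topological essentiality of $\tilde{f}$, fix $p\in C$ and suppose $\sum_{l}a_l[D_l]=0$ in $H_{2n-2}(\widetilde{X},\mathbb{R})$ with $a_l\geq 0$, where $\{D_l\}$ are the irreducible components of $\tilde{f}^{-1}(p)=\pi^{-1}(f^{-1}(p))$; by the open-mapping remark for holomorphic maps from purely $n$-dimensional complex spaces onto a nonsingular curve, each $D_l$ has complex dimension $n-1$. These components fall into two disjoint classes: (a) the strict transforms $\widetilde{F}_{p,m}$ of irreducible components $F_{p,m}$ of $f^{-1}(p)$, none of which lies in $E$ since the codimension of $\pi(E)$ in the normal space $X$ is at least $2$ while each $F_{p,m}$ is a hypersurface; (b) the $(n-1)$-dimensional irreducible components of $\pi^{-1}(F_p\cap\pi(E))$, which are hypersurfaces contained in $E$. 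For type (a), $\pi_*[D_l]=[F_{p,m(l)}]$, while for type (b) $\pi_*[D_l]=0$ because $\pi(D_l)\subseteq\pi(E)$ has complex dimension $\leq n-2$. Pushing the given relation forward by $\pi_*$ to $H_{2n-2}(X,\mathbb{R})$ produces $\sum_{l\in(a)}a_l[F_{p,m(l)}]=0$, and topological essentiality of $f$ then forces $a_l=0$ for every $l\in(a)$. The remaining sum is a nonnegative vanishing combination of hypersurfaces in $E$, so the hypothesis on the exceptional set forces $a_l=0$ for every $l\in(b)$ as well.

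To check the strongly Gauduchon property for nonsingular fibers of $\tilde{f}$, suppose $\tilde{f}^{-1}(p)$ is a smooth subvariety of $\widetilde{X}$. Then it cannot simultaneously contain a strict-transform component $\widetilde{F}_{p,m}$ and a type-(b) exceptional component, because whenever $F_p\cap\pi(E)\neq\emptyset$ any $q\in F_p\cap\pi(E)$ forces $\widetilde{F}_{p,m}$ to meet the exceptional part $\pi^{-1}(q)$ of the fiber in a nonempty set, producing singularities along that intersection. Hence $F_p\cap\pi(E)=\emptyset$, $\pi$ restricts to a biholomorphism $\tilde{f}^{-1}(p)\xrightarrow{\sim}F_p$, and $F_p$ is a nonsingular fiber of $f$; by hypothesis $F_p$ is strongly Gauduchon, whence so is $\tilde{f}^{-1}(p)$. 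Applying \cite{X}, Theorem 4.1 to $\tilde{f}$ now yields that $\widetilde{X}$ is a strongly Gauduchon manifold, so $X\in\mathscr{SG}$.

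The most delicate step is the verification of topological essentiality: one must carefully identify the two classes of irreducible components of $\tilde{f}^{-1}(p)$, confirm their disjointness, and recognize that the two given hypotheses of the theorem---topological essentiality of $f$ and the condition on $E$---are exactly what is needed to annihilate the pushed-forward and remaining parts of the relation, in that order.
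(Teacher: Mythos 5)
Your proposal is correct and follows essentially the same route as the paper: reduce to \cite{X}, Theorem 4.1 for $\tilde f=f\circ\pi$, prove topological essentiality of $\tilde f$ by pushing a vanishing nonnegative combination forward under $\pi_*$ (strict transforms map to $[F_{p,m}]$, exceptional components to $0$ since their images have codimension $\geq 2$ in the normal space $X$) and then invoking the two hypotheses in that order, and handle nonsingular fibers of $\tilde f$ by showing they must avoid the exceptional set and hence map isomorphically onto nonsingular fibers of $f$. The only difference is cosmetic: the paper tracks more explicitly which exceptional components $E_{ij}$ are maximal and hence genuine irreducible components of $\tilde f^{-1}(p)$, but your two-class decomposition captures the same bookkeeping.
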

\begin{proof}
Set $\tilde{f}:=f\circ\pi$. For every $p\in C$, set $f^{-1}(p)=\bigcup_iV_i$, where $V_i$ are all the irreducible components of $f^{-1}(p)$ which have dimension $n-1$. Since $X$ is normal, $codimX_s \geq 2$, where $X_{s}$ is the set of singular points of $X$. So
\begin{displaymath}
\pi^{-1}(V_i)= \widetilde{V_i}\cup\bigcup_j E_{ij},
\end{displaymath}
where $\widetilde{V_i}=\overline{\pi^{-1}(V_i-X_s)}$ is the strict transform of $V_i$, and $E_{ij}$ are all irreducible components of $\pi^{-1}(V_i)$ contained in the exceptional set of $\pi$. It is possible that some $E_{i j}$ are contained in other $E_{k l}$ or $\widetilde{V_k}$. We denote any $E_{i j}$, which is not properly contained in other $E_{k l}$ or $\widetilde{V_k}$, by $E_{i j'}$  and we denote any $E_{i j}$, which is properly contained in other $E_{k l}$ or $\widetilde{V_k}$, by $E_{i j''}$ (i.e. there exists other $E_{k l}$ or $\widetilde{V_k}$, such that $E_{i j''}\subsetneqq E_{k l}$ or $\widetilde{V_k}$), then
\begin{displaymath}
\tilde{f}^{-1}(p)=\bigcup_i (\widetilde{V_i}\cup \bigcup_{j'} E_{i j'})
\end{displaymath}
is the irreducible decomposition of $\tilde{f}^{-1}(p)$, hence $\textup{codim}E_{i j'}=1$.

We need the following two claims.
\vspace{2mm}

\noindent \textbf{Claim 1.} $\tilde{f}$ is topologically essential.
\begin{proof}
If not, we have
\begin{displaymath}
\Sigma_ia_i[\widetilde{V_i}] + \Sigma_{ij'}b_{ij'}[E_{ij'}] = 0,
\end{displaymath}
in $H_{2n-2}(\widetilde{X}, \mathbb{R})$, for some $a_i$, $b_{ij'}$ $\geq 0$ and at least one of the ${a_i}^,s$, ${b_{ij'}}^,s$ is positive. Since $\pi(E_{ij'}) \subseteq X_s$ has codimension $\geq 2$, $\pi_*[E_{ij'}] = 0$ in $H_{2n-2}(X, \mathbb{R})$. In $H_{2n-2}(X, \mathbb{R})$, $\pi_*[\widetilde{V_i}] = [V_i]$ ,  hence
\begin{displaymath}
\Sigma_ia_i[V_i]=0
\end{displaymath}
through $\pi_*$. Since $f$ is topologically essential, $a_i=0$ for all $i$. So
\begin{displaymath}
\Sigma_{ij'}b_{ij'}[E_{ij'}] = 0,
\end{displaymath}
in $H_{2n-2}(\widetilde{X}, \mathbb{R})$, where $b_{ij'}$ $\geq 0$ and at least one of the ${b_{ij'}}^,s$ is positive. It contradicts the assumption on $\widetilde{X}$.
\end{proof}
\vspace{2mm}

\noindent \textbf{Claim 2.} For every $p\in C$, if $\tilde{f}^{-1}(p)$ is nonsingular, then it is a strongly Gauduchon manifold.
\begin{proof}
Since $\tilde{f}^{-1}(p)=\bigcup_i(\widetilde{V_i}\cup\bigcup_{j'} E_{ij'})$ is nonsingular, we have
\begin{displaymath}
\widetilde{V_i}\cap \widetilde{V_k} = \emptyset, \quad \forall i\neq k;
\end{displaymath}
\begin{displaymath}
\widetilde{V_i}\cap E_{k l'}= \emptyset, \quad \forall i, k, l'.
\end{displaymath}
Since for any $i, j$, $E_{ij}$ is contained in some $E_{k l'}$ or $\widetilde{V_k}$, we have $\widetilde{V_i}\cap E_{ij}=\emptyset$.
On the other hand, if $V_i\cap X_s\neq\emptyset$, then the intersection of $\widetilde{V_i}$ and $\cup_jE_{ij}$ is not empty, which contradicts with $\widetilde{V_i}\cap E_{ij}=\emptyset$. So for all $i$, $V_i\cap X_s=\emptyset$. Hence, the map
\begin{displaymath}
\pi\mid_{\tilde{f}^{-1}(p)}:\tilde{f}^{-1}(p)\rightarrow f^{-1}(p)
\end{displaymath}
is an isomorphism. Since every nonsingular  fiber of $f$ is a strongly Gauduchon manifold and $\tilde{f}^{-1}(p)$ is nonsingular, $\tilde{f}^{-1}(p)$ is a strongly Gauduchon manifold.
\end{proof}

Now, by the Claim 1 and 2, $\widetilde{X}$ is a strongly Gauduchon manifold according to \cite{X}, Theorem 4.1. Hence, $X$ $\in$ $\mathscr{SG}$.
\end{proof}

By the above theorem, we have the following corollary immediately.

\begin{corollary} Suppose $X$ is a purely dimensional compact normal complex space which admits a topologically essential holomorphic map $f:X\rightarrow C$ onto a nonsingular compact complex curve $C$, and $X$ has a desingularization $\widetilde{X}$ whose exceptional set has codimension $\geq 2$.  If every nonsingular  fiber of $f$ is a strongly Gauduchon manifold, then $X\in \mathscr{SG}$.
\end{corollary}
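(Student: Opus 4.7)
The plan is simply to reduce the corollary to Theorem \ref{topess} by verifying its hypotheses. The only hypothesis that differs between the two statements is the condition on the exceptional set of the desingularization: Theorem \ref{topess} requires that no nonzero nonnegative linear combination of hypersurfaces contained in the exceptional set of $\pi$ is zero in $H_{2n-2}(\widetilde{X}, \mathbb{R})$, while the corollary assumes the exceptional set has codimension $\geq 2$.

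The key observation is that under the codimension assumption, the exceptional set contains \emph{no} hypersurfaces at all. Indeed, a hypersurface in $\widetilde{X}$ is by definition a pure $(n-1)$-dimensional analytic subset, and if such a subset were contained in the exceptional set, the exceptional set would have an irreducible component of codimension $1$, contradicting the assumption. Consequently, the set of nonnegative linear combinations of hypersurfaces contained in the exceptional set is empty (or, more precisely, contains only the trivial combination), so the condition in Theorem \ref{topess} is vacuously satisfied.

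With that observation the corollary is immediate: all remaining hypotheses (pure-dimensional compact normal $X$, topologically essential $f:X\rightarrow C$, every nonsingular fiber of $f$ a strongly Gauduchon manifold, existence of the desingularization $\pi:\widetilde{X}\to X$) are assumed verbatim, so Theorem \ref{topess} applies and yields $X\in\mathscr{SG}$. There is no essential obstacle here; the only thing worth stating carefully in the write-up is the reason why codimension $\geq 2$ precludes hypersurfaces inside the exceptional set, which is just the definition of codimension of an analytic set.
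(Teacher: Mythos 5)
Your proposal is correct and is exactly the argument the paper intends: the corollary is stated as an immediate consequence of Theorem \ref{topess}, with the codimension $\geq 2$ hypothesis making the condition on hypersurfaces in the exceptional set vacuous. Nothing further is needed.
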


\begin{corollary}
Let $X$ be a purely $n$-dimensional normal compact complex space which admits a topologically essential holomorphic map onto a nonsingular compact complex curve. If the betti number $b_{2n-1}(X)= 0$, then $X\in \mathscr{SG}$.
\begin{proof}
By Lemma \ref{exact}, we know that, for any desingularization $\pi:\widetilde{X}\to X$, $\{[E_j]\}_j$ are linearly independent in $H_{2n-2}(\widetilde{X}, \mathbb{R})$, where  $\{E_j\}_j$ are all the $(n-1)$-dimensional irreducible components of the exceptional set of $\pi$. Using Theorem \ref{topess}, we get this corollary immediately.
\end{proof}
\end{corollary}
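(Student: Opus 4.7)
The plan is to reduce the statement to Theorem \ref{topess}, which requires three inputs on $X$: (a) a topologically essential holomorphic map $f:X\to C$ onto a nonsingular compact curve; (b) every nonsingular fibre of $f$ is a strongly Gauduchon manifold; and (c) a desingularization $\pi:\widetilde{X}\to X$ such that no nonzero nonnegative linear combination of hypersurfaces contained in the exceptional set $E$ of $\pi$ is zero in $H_{2n-2}(\widetilde{X},\mathbb{R})$. Conditions (a) and (b) are furnished directly by the hypotheses of the corollary (the strongly Gauduchon condition on fibres being understood as inherited from the previous corollary), so the work is entirely in establishing (c).

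To produce (c) I would fix any desingularization $\pi:\widetilde{X}\to X$ and apply Lemma \ref{exact}. Its hypotheses are met: $X$ is normal and compact of dimension $n$, and $b_{2n-1}(X)=0$ by assumption. The lemma then yields the exact sequence
\[
0\longrightarrow H_{2n-2}(E,\mathbb{R})\xrightarrow{i_{*}} H_{2n-2}(\widetilde{X},\mathbb{R})\longrightarrow H_{2n-2}(X,\mathbb{R})
\]
together with the identification $H_{2n-2}(E,\mathbb{R})=\bigoplus_{j}\mathbb{R}[E_{j}]$, where the $E_{j}$ exhaust the $(n-1)$-dimensional irreducible components of $E$. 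Injectivity of $i_{*}$ forces the classes $[E_{j}]$ to be $\mathbb{R}$-linearly independent in $H_{2n-2}(\widetilde{X},\mathbb{R})$, and a fortiori any nontrivial nonnegative combination $\sum_{j}c_{j}[E_{j}]$ is nonzero. This is precisely condition (c), in fact in its sharpest form.

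With (a), (b) and (c) assembled, Theorem \ref{topess} applies verbatim and delivers $X\in\mathscr{SG}$. There is no serious obstacle: the content of the corollary is really just packaging, since Lemma \ref{exact} converts the Betti-number hypothesis $b_{2n-1}(X)=0$ into exactly the exceptional-set condition needed by Theorem \ref{topess}. The one point worth checking is that the two enumerations of irreducible components match up, but this is immediate because both statements restrict attention to the top-dimensional, i.e.\ $(n-1)$-dimensional, components of $E$.
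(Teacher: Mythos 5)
Your proof is correct and follows exactly the paper's route: Lemma \ref{exact} gives injectivity of $i_*$, hence linear independence of the classes $[E_j]$ in $H_{2n-2}(\widetilde{X},\mathbb{R})$, which is precisely the exceptional-set hypothesis of Theorem \ref{topess}. You also rightly flag that the strongly Gauduchon condition on nonsingular fibres must be read into the statement; the paper's own one-line proof tacitly assumes it as well.
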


\noindent{\bf Acknowledgements.}\,
We would like to thank our supervisor Prof. Jixiang Fu for his constant encouragement
and many helpful discussions.

\end{document}